\newcommand{\pdfgraphics}{\ifpdf\DeclareGraphicsExtensions{.pdf,.jpg}\else\fi}
\definecolor{hanblue}{rgb}{0.27, 0.42, 0.81}
\definecolor{red}{rgb}{1.0, 0.0, 0.0}
\renewcommand{\div}{{\rm div}\,}
\newcommand{\R}{\mathbb{R}}
\newcommand{\Z}{\mathbb{Z}}
\newcommand{\Ha}{\mathcal{H}}
\definecolor{green(munsell)}{rgb}{0.0, 0.66, 0.47}
\definecolor{inchworm}{rgb}{0.7, 0.93, 0.36}
\definecolor{icterine}{rgb}{0.99, 0.97, 0.37}
\definecolor{magicmint}{rgb}{0.67, 0.94, 0.82}
\definecolor{tigerseye}{rgb}{0.88, 0.55, 0.24}
\definecolor{lavenderblue}{rgb}{0.8, 0.8, 1.0}
\definecolor{ceruleanblue}{rgb}{0.16, 0.32, 0.75}
\definecolor{airforceblue}{rgb}{0.36, 0.54, 0.66}
\definecolor{tealgreen}{rgb}{0.0, 0.51, 0.5}
\definecolor{cream}{rgb}{1.0, 0.99, 0.82}
\definecolor{jonquil}{rgb}{0.98, 0.85, 0.37}
\newcommand\restr[2]{{
  \left.\kern-\nulldelimiterspace 
  #1 
  \vphantom{\big|} 
  \right|_{#2} 
  }}
\numberwithin{equation}{section}
\theoremstyle{plain}
\newtheorem{thm}{Theorem}[section]
\newtheorem{lemma}[thm]{Lemma}
\newtheorem{prop}[thm]{Proposition}
\theoremstyle{definition}
\newtheorem{dfnz}[thm]{Definition}
\theoremstyle{remark}
\newtheorem{rem}[thm]{Remark}
\newtheorem{ex}[thm]{Example}
\definecolor{green(munsell)}{rgb}{0.0, 0.66, 0.47}
\definecolor{inchworm}{rgb}{0.7, 0.93, 0.36}
\definecolor{icterine}{rgb}{0.99, 0.97, 0.37}
\definecolor{magicmint}{rgb}{0.67, 0.94, 0.82}
\begin{document}
\pdfgraphics 

\pdfgraphics 

\title{On different notions of calibrations for minimal partitions and minimal networks in $\mathbb{R}^2$}

\author{
Marcello Carioni \footnote{Institut f\"ur Mathematik,
Universit\"at Graz, Heinrichstraße 36,
8010 Graz,
Austria}
\and 
Alessandra Pluda \footnote{Dipartimento di Matematica, Universit\`{a} di Pisa,
Largo Bruno Pontecorvo 5, 56127, Pisa, Italy}}

\date{}

\maketitle

\begin{abstract}
\noindent
Calibrations are a possible tool to validate the minimality of a certain candidate. 
They have been introduced in the context 
of minimal surfaces~\cite{berger, realcochains, CGE}
and adapted to the case of the Steiner problem in several variants. 
Our goal is to compare the different notions of calibrations
for the Steiner Problem and for planar minimal partitions appearing in~\cite{calicipi,lawmor,annalisaandrea}.
The paper is then complemented with remarks on the convexification of the problem, on
non--existence of calibrations and on calibrations in families.
\end{abstract}

\section{Introduction}

Let $S$ be a
collection of $n$ points $p_1,\ldots,p_n$ in the Euclidean plane.
We want to
find a connected set that contains $S$ whose length is minimal, namely
\begin{equation}\label{ste}
\inf  \{\Ha^1(K) : K \subset \R^2, \mbox{ connected and such that } S
\subset K\}\,.
\end{equation}
This latter is commonly known as the \emph{Steiner problem}.

Although the existence of minimizers is known,
finding explicitly a solution is
extremely challenging even numerically.
For this reason every method to determine solutions is welcome.
A classical tool is the notion of calibration,
introduced in the framework of minimal surfaces~\cite{berger, realcochains, CGE} 
(see also~\cite[\S~6.5]{morganbook} for an overview of the history of calibrations):
given $M$ a $k$--dimensional oriented manifold in $\R^{d}$, a calibration
for $M$
is a closed $k$--form $\omega$
such that $\vert\omega\vert\leq 1$ and $\langle \omega, \xi\rangle = 1$
for every $\xi$ in the
tangent space of $M$. 
The existence of a calibration for $M$ implies that the
manifold is area minimizing in its homology class. Indeed
given an oriented $k$--dimensional manifold $N$ such that $\partial M =
\partial N$
we have
\begin{equation*}
\mbox{Vol}(M) = \int_{M} \omega = \int_N \omega \leq \mbox{Vol}(N)\, ,
\end{equation*}
where we applied the properties required on the calibration $\omega$
and we used Stokes' theorem in the second equality.

\medskip

This definition of calibration is not suitable for the Steiner
Problem~\eqref{ste}
simply for the reason that
neither the competitors nor the minimizers of the problem 
admit an orientation which is compatible with their boundary.
To overcome this issue
several variants have been
defined
starting from the \emph{paired calibrations}
by Morgan and Lawlor in~\cite{lawmor},
where the Steiner problem is seen as a problem of minimal partitions.
In~\cite{annalisaandrea}
Marchese and Massaccesi
rephrase the Steiner Problem as a mass minimization for
 $1$--rectifiable currents with coefficients in a group
 and this leads to a suitable definition of calibrations
(see also~\cite{orlandi}).
Finally reviving the approach via covering space by Brakke~\cite{brakke}
(see~\cite{cover} for the existence theory)
another notion of calibrations
has been produced~\cite{calicipi}.

\medskip

A natural question is whether
the previously mentioned notions of calibrations are equivalent. In the first part of the paper we give an answer to it.
When the points of $S$ lie on the boundary of a convex set
(actually the only case in which paired calibrations are defined)
calibrations on coverings are nothing but paired calibrations.
On the other hand an equivalence does not exist between
calibrations on coverings and calibrations for currents with coefficients in $\mathbb{R}^n$;
in particular the notion of calibrations for currents
is stronger than the one on coverings.
In other words it is easier to find a calibration on coverings.

Let us now discuss in more depth the relation between the two notions.
The definition of calibrations for currents with coefficients in
$\mathbb{R}^n$
(see Definition~\ref{calicurrents})
depends on choice of the norm of $\mathbb{R}^n$ (see \cite{morgancluster} where different norms are used to study clusters with multiplicities).
The norm considered in~\cite{annalisaandrea} (see also \cite{morgancluster}), here denoted by $\Vert\cdot\Vert_\flat$,
is the one that produces the weakest notion of calibrations and still gives the equivalence with the Steiner problem
in $\mathbb{R}^d$ with $d\geq 2$: the ``best possible" norm in a certain
sense.
It turns out that this notion of calibration is stronger than the one
on coverings.
Indeed in Theorem~\ref{dacalicurrentacalicovering} we are able to prove that
if a calibration for a mass minimizing current with coefficients in $\R^n$ exists, then
there exists also a calibration for a perimeter minimizing set in a given
covering,
but the converse does not hold.
To prove a sort of converse one has to abandon the idea
of working in the general setting of Marchese and
Massaccesi~\cite{annalisaandrea}
and take full advantage of restricting to  $\mathbb{R}^2$.
To this aim we slightly change the mass minimization problem and
we define a different norm on $\mathbb{R}^n$ denoted by
$\Vert\cdot\Vert_\natural$
(the unit ball of $\Vert\cdot\Vert_\natural$ is smaller than the one of
$\Vert\cdot\Vert_\flat$ as one can see (at least in $\mathbb{R}^3$)
from their Frank diagram depicted in Figure~\ref{norm}).
The $\Vert\cdot\Vert_\natural$ notion of calibration is equivalent with the definition of calibrations on coverings in $\R^2$.

\medskip

The second part of the paper has a different focus and it can be seen as a
completion of~\cite{calicipi} as we restrict our attention to
calibrations on coverings.
In Theorem~\ref{impli} we prove that the existence of a calibration for a
constrained set $E$
in a covering $Y$ implies the minimality of $E$ not only among
(constrained) finite perimeter sets,
but also in the larger class of finite linear combinations of characteristic
functions of finite perimeter sets (satisfying a suitable constraint).
This apparently harmless result has some remarkable consequences.

First of all it is directly related to the convexification of the problem
naturally associated with the notion of calibration.
This convexification $G$ is the
so--called  ``local convex envelope" and it has been defined
by Chambolle, Cremers and Pock.
In~\cite{chambollecremerspock}  they are able to prove that it is
the tightest among the convexifications with an integral form.
Unfortunately it does not coincide with the convex envelope of the
functional,
whose characterization is unknown.
We show that $G$ equals the total variation
on constrained $BV$ functions with a finite number of values.
In other words, the local convex envelope ``outperforms" the total variation
only when evaluated on constrained $BV$ functions
whose derivatives have absolutely continuous parts 
with respect to $\mathscr{L}^2$.

As a second consequence of Theorem~\ref{impli}
we produce a counterexample to the existence of calibrations.
It has already been exhibited in the setting of normal currents by
Bonafini~\cite{bonafini}
and because of the result of Section~\ref{equivalence} we had to
``translate" it in our framework.
It is specific to the case
in which $S$ is composed of five points, the vertices of a regular pentagon, and
cannot be easily generalized to vertices of  other regular polygons.

\medskip

We summarize here the structure of the paper.
In Section~\ref{problem} we recap the different approaches to the
Steiner Problem and the consequent notions of calibrations.
Section~\ref{equivalence}
is devoted to the relations among different definitions of calibrations.
Then in Section~\ref{convexification} we generalize the theorem
``existence of calibrations implies minimality", and this allows us to
complement a result by Chambolle, Cremers and Pock on the
convexification of the problem.
An example of nonexistence of calibrations is given in
Section~\ref{nonexistence}.
The paper is concluded with some remarks about the calibrations
in families presented in~\cite{calicipi} that underline the effectiveness
of our method.

\section{Notions of calibrations for minimal Steiner networks}\label{problem}

In this section we briefly review the approaches to the Steiner Problem and the related notions of calibrations presented in the literature~\cite{calicipi, lawmor, annalisaandrea}.
 
\subsection{Covering space approach \cite{cover,brakke,calicipi}}

We begin by explaining the approach via covering space by Brakke~\cite{brakke}
and Amato, Bellettini and Paolini~\cite{cover}.
They proved that minimizing the  perimeter  among 
constrained sets on a suitable defined 
covering space of $\mathbb{R}^2\setminus S=:M$ 
is equivalent minimizing the length among all networks that connect the point of $S$.
We refer to both~\cite{cover} and~\cite{calicipi} for details.
\medskip

Consider a \emph{covering space} $(Y, p)$ where $p:Y\to M$ 
is the projection onto the base space.
Consider  $\ell$  a loop in $\mathbb{R}^2$  around at most $n-1$ points of $S$.
Heuristically $Y$ is composed of $n$ copies of $\mathbb{R}^2$ 
(the sheets of the covering space) glued in 
such a way that going along $p^{-1}(\ell)$ in $Y$,
one ”visits” all the $n$ sheets.
We avoid repeating here the explicit construction of $Y$ presented in~\cite{cover}
but it is relevant to keep in mind 
how points of different copies of $\mathbb{R}^2\setminus S$ are identified.
First the $n$ points of $S$ in $\R^2$ are connected 
with a \emph{cut} $\Sigma \subset \Omega$ 
given by the union of injective Lipschitz curves $\Sigma_i$ from $p_i$ to $p_{i+1}$
(with $i\in\{1,\ldots, n-1\}$) not intersecting each other.
Then $\Sigma_i$ is lifted to all the $n$ sheets of $M$ and 
the points of $\Sigma_i$ of the $j$--th sheet are identified with points of
$\Sigma_i$ of the $k$--th sheet 
via the equivalence relation 
\begin{equation*}
k\equiv j+i\, (\mathrm{mod} \;n) \qquad\text{with}\; i=1,\ldots, n-1\; \text{and}\; j=1,\ldots, n\,.
\end{equation*}
This equivalence relation produces a non--trivial  covering of $M$.

\begin{figure}[h]
\begin{center}
\begin{tikzpicture}[scale=0.8]
\draw[white]
(-2,-1.85)to[out= 0,in=180, looseness=1] (2,-1.85)
(-2,1.85)to[out= 0,in=180, looseness=1] (2,1.85)
(-2,-1.85)to[out= 90,in=-90, looseness=1] (-2,1.85)
(2,-1.85)to[out= 90,in=-90, looseness=1] (2,1.85);
\fill[black](0,1) circle (1.7pt);
\fill[black](-0.85,-0.5) circle (1.7pt);
\fill[black](0.85,-0.5) circle (1.7pt);
\path[font=\normalsize]
(-0.86,-0.5)node[left]{$p_1$}
(0.86,-0.5)node[right]{$p_2$}
(0,1)node[above]{$p_3$};
\path[font=\small]
(-1.5,-1.85) node[above]{$\mathbb{R}^2$};
\end{tikzpicture}\quad
\begin{tikzpicture}[scale=0.8]
\draw[black!50!white, dashed]
(-2,-1.85)to[out= 0,in=180, looseness=1] (2,-1.85)
(-2,1.85)to[out= 0,in=180, looseness=1] (2,1.85)
(-2,-1.85)to[out= 90,in=-90, looseness=1] (-2,1.85)
(2,-1.85)to[out= 90,in=-90, looseness=1] (2,1.85);
\draw[color=black, dashed, very thick]
(0.86,-0.5)to[out= 90,in=-30, looseness=1] (0,1)
(-0.86,-0.5)to[out= -30,in=-150, looseness=1] (0.86,-0.5);
\draw[color=black,scale=1,domain=-3.141: 3.141,
smooth,variable=\t,shift={(0,-0.7)},rotate=0]plot({0.4*sin(\t r)},
{0.4*cos(\t r)});
\fill[black](0,1) circle (1.7pt);
\fill[black](-0.85,-0.5) circle (1.7pt);
\fill[black](0.85,-0.5) circle (1.7pt);
\path[font=\normalsize]
(-0.86,-0.5)node[left]{$p_1$}
(0.86,-0.5)node[right]{$p_2$}
(0,1)node[above]{$p_3$};
\path[font=\small]
(-1.5,-1.85) node[above]{$D_1$};
\filldraw[fill=white, color=black, pattern=dots, pattern color=black]
(-0.4,-0.7)to[out= -90,in=180, looseness=1](0,-1.1)--
(0,-1.1)to[out= 0,in=-90, looseness=1](0.4,-0.7)--
(0.4,-0.7)to[out= -165,in=-15, looseness=1](-0.4,-0.7);
\filldraw[fill=white, color=blue, pattern=grid,  pattern color=blue]
(-0.4,-0.7)to[out= 90,in=180, looseness=1](0,-0.3)--
(0,-0.3)to[out= 0,in=90, looseness=1](0.4,-0.7)--
(0.4,-0.7)to[out= -165,in=-15, looseness=1](-0.4,-0.7);
\filldraw[fill=white, color=green(munsell), pattern=dots, pattern color=green(munsell)]
(0.825,-0.1)to[out= 10,in=-90, looseness=1](1.1,0.25)--
(1.1,0.25)to[out= 90,in=35, looseness=1](0.5,0.6)--
(0.5,0.6)to[out= -60,in=100, looseness=1](0.825,-0.1);
\filldraw[fill=white, color=yellow, pattern=grid, pattern color=yellow]
(0.825,-0.1)to[out= -170,in=-90, looseness=1](0.3,0.25)--
(0.3,0.25)to[out= 90,in=-145, looseness=1](0.5,0.6)--
(0.5,0.6)to[out= -60,in=100, looseness=1](0.825,-0.1);
\end{tikzpicture}\quad
\begin{tikzpicture}[scale=0.8]
\draw[black!50!white, dashed]
(-2,-1.85)to[out= 0,in=180, looseness=1] (2,-1.85)
(-2,1.85)to[out= 0,in=180, looseness=1] (2,1.85)
(-2,-1.85)to[out= 90,in=-90, looseness=1] (-2,1.85)
(2,-1.85)to[out= 90,in=-90, looseness=1] (2,1.85);
\draw[color=black, dashed, very thick]
(0.86,-0.5)to[out= 90,in=-30, looseness=1] (0,1)
(-0.86,-0.5)to[out= -30,in=-150, looseness=1] (0.86,-0.5);
\draw[color=black,scale=1,domain=-3.141: 3.141,
smooth,variable=\t,shift={(0,-0.7)},rotate=0]plot({0.4*sin(\t r)},
{0.4*cos(\t r)});
\fill[black](0,1) circle (1.7pt);
\fill[black](-0.85,-0.5) circle (1.7pt);
\fill[black](0.85,-0.5) circle (1.7pt);
\path[font=\normalsize]
(-0.86,-0.5)node[left]{$p_1$}
(0.86,-0.5)node[right]{$p_2$}
(0,1)node[above]{$p_3$};
\path[font=\small]
(-1.5,-1.85) node[above]{$D_2$};
\filldraw[fill=white, color=lavenderblue, pattern=north west lines, pattern color=lavenderblue]
(-0.4,-0.7)to[out= 90,in=180, looseness=1](0,-0.3)--
(0,-0.3)to[out= 0,in=90, looseness=1](0.4,-0.7)--
(0.4,-0.7)to[out= -165,in=-15, looseness=1](-0.4,-0.7);
\filldraw[fill=white,  color=blue, pattern=grid,  pattern color=blue]
(-0.4,-0.7)to[out= -90,in=180, looseness=1](0,-1.1)--
(0,-1.1)to[out= 0,in=-90, looseness=1](0.4,-0.7)--
(0.4,-0.7)to[out= -165,in=-15, looseness=1](-0.4,-0.7);
\filldraw[fill=white, color=tigerseye, pattern=north west lines, pattern color=tigerseye]
(0.825,-0.1)to[out= 10,in=-90, looseness=1](1.1,0.25)--
(1.1,0.25)to[out= 90,in=35, looseness=1](0.5,0.6)--
(0.5,0.6)to[out= -60,in=100, looseness=1](0.825,-0.1);
\filldraw[fill=white, color=green(munsell), pattern=dots, pattern color=green(munsell)]
(0.825,-0.1)to[out= -170,in=-90, looseness=1](0.3,0.25)--
(0.3,0.25)to[out= 90,in=-145, looseness=1](0.5,0.6)--
(0.5,0.6)to[out= -60,in=100, looseness=1](0.825,-0.1);
\end{tikzpicture}\quad
\begin{tikzpicture}[scale=0.8]
\draw[black!50!white, dashed]
(-2,-1.85)to[out= 0,in=180, looseness=1] (2,-1.85)
(-2,1.85)to[out= 0,in=180, looseness=1] (2,1.85)
(-2,-1.85)to[out= 90,in=-90, looseness=1] (-2,1.85)
(2,-1.85)to[out= 90,in=-90, looseness=1] (2,1.85);
\draw[color=black, dashed, very thick]
(0.86,-0.5)to[out= 90,in=-30, looseness=1] (0,1)
(-0.86,-0.5)to[out= -30,in=-150, looseness=1] (0.86,-0.5);
\draw[color=black,scale=1,domain=-3.141: 3.141,
smooth,variable=\t,shift={(0,-0.7)},rotate=0]plot({0.4*sin(\t r)},
{0.4*cos(\t r)});
\fill[black](0,1) circle (1.7pt);
\fill[black](-0.85,-0.5) circle (1.7pt);
\fill[black](0.85,-0.5) circle (1.7pt);
\path[font=\normalsize]
(-0.86,-0.5)node[left]{$p_1$}
(0.86,-0.5)node[right]{$p_2$}
(0,1)node[above]{$p_3$};
\path[font=\small]
(-1.5,-1.85) node[above]{$D_3$};
\filldraw[fill=white, pattern=dots]
(-0.4,-0.7)to[out= 90,in=180, looseness=1](0,-0.3)--
(0,-0.3)to[out= 0,in=90, looseness=1](0.4,-0.7)--
(0.4,-0.7)to[out= -165,in=-15, looseness=1](-0.4,-0.7);
\filldraw[fill=white,color=lavenderblue, pattern=north west lines, pattern color=lavenderblue]
(-0.4,-0.7)to[out= -90,in=180, looseness=1](0,-1.1)--
(0,-1.1)to[out= 0,in=-90, looseness=1](0.4,-0.7)--
(0.4,-0.7)to[out= -165,in=-15, looseness=1](-0.4,-0.7);
\filldraw[fill=white,color=yellow, pattern=grid, pattern color=yellow]
(0.825,-0.1)to[out= 10,in=-90, looseness=1](1.1,0.25)--
(1.1,0.25)to[out= 90,in=35, looseness=1](0.5,0.6)--
(0.5,0.6)to[out= -60,in=100, looseness=1](0.825,-0.1);
\filldraw[fill=white,color=tigerseye, pattern=north west lines, pattern color=tigerseye]
(0.825,-0.1)to[out= -170,in=-90, looseness=1](0.3,0.25)--
(0.3,0.25)to[out= 90,in=-145, looseness=1](0.5,0.6)--
(0.5,0.6)to[out= -60,in=100, looseness=1](0.825,-0.1);
\end{tikzpicture}
\end{center}
\caption{A closer look at the topology of the covering $Y$ of $\mathbb{R}^2\setminus\{p_1,p_2,p_3\}$. Each ball is represented with one texture and color.}
\end{figure}
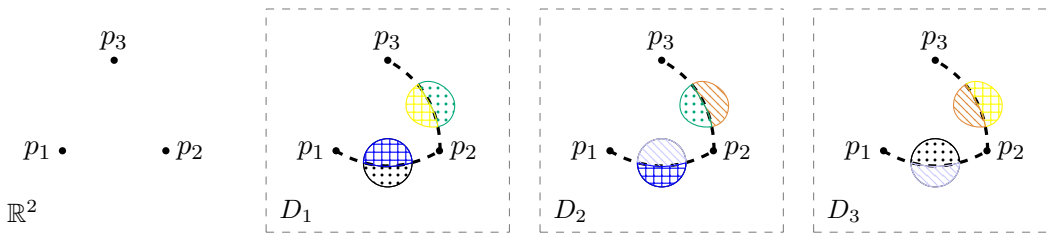

We remark that for technical reasons the construction in~\cite{cover}
requires the definition of a \emph{pair of cuts} joining the point $p_i$ with $p_{i+1}$. 
Then the equivalence relation is defined identifying the open sets enclosed by the pairs of cuts.

\begin{rem}\label{presc}
Given a function $f: Y \rightarrow\R^m$ 
it is possible to define the parametrizations of $f$ on the sheet $j$ 
as a function $f^j : M \rightarrow \R^m$ for every $j=1\ldots,n$ 
(see~\cite[Definition 2.5, Definition 2.6]{calicipi} for further details).

It is then possible to define functions $f$ (resp. sets $E$)
on the covering space $Y$ prescribing the parametrizations $f^j : M \rightarrow \R^m$ 
(resp. sets $E^j$) for every $j=1\ldots,n$. 
The set $E^j$ is the set determined by the parametrization of $\chi_E$ on the sheet $j$.
\end{rem}

We define now the class of sets $\mathscr{P}_{constr}(Y)$ that we will consider
to get the equivalence with the Steiner Problem. 
A set $E$ belongs to the space $\mathscr{P}_{constr}(Y)$
if it is a set of finite perimeter in $Y$,
for almost every $x$ in the base space there exists exactly one point $y$ of $E$
such that $p(y)=x$ and it satisfies a suitable boundary condition at infinity.

More precisely fixing an open, regular and bounded set 
$\Lambda\subset\mathbb{R}^2$ such that 
$\Sigma\subset \Lambda$ and $\mathrm{Conv}(S)\subset \Lambda$ (here $\mathrm{Conv}(S)$ denotes the convex envelope of $S$), we defined rigorously $\mathscr{P}_{constr}(Y)$ as follows:

\begin{dfnz}[Constrained sets]\label{zerounoconstr}
We denote by $\mathscr{P}_{constr}(Y)$ the space of
the sets of finite perimeter in $Y$ such that 
\begin{itemize}\label{zerouno}
\item [$i)$] $\sum_{p(y) = x} \chi_E(y) = 1\ \ $ for almost every $x\in M$,
\item [$ii)$] $\chi_{E^1}(x) = 1\ \  $ for every $x\in \mathbb{R}^2 \setminus \Lambda$.
\end{itemize}
\end{dfnz}

We look for 
\begin{equation}\label{minpro2}
\min \left\{P(E) : E\in\mathscr{P}_{constr}(Y)\right\}\,.
\end{equation}

\begin{rem}\label{independent}
Problem~\eqref{minpro2} does not depend on the choice of the cut $\Sigma$ 
in the definition of the covering space $Y$ (see \cite{cover}).
Moreover given $E_{min}$ a minimizer for~\eqref{minpro2} 
it is always possible to label the points $S$ in such a way 
that the cut $\Sigma$ does not intersect the projection of the reduced boundary of
$E_{\min}$ (see~\cite[Proposition 2.28]{calicipi}). 
From now on we always do this choice of the labeling of $S$.
\end{rem}

\begin{thm}
The Steiner Problem is equivalent to Problem~\eqref{minpro2}.
\end{thm}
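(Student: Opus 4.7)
My plan is to establish the equivalence by constructing an explicit correspondence between Steiner competitors and constrained sets. Given any connected $K \supset S$ with $\Ha^1(K) < +\infty$ I lift $K$ to an element $E \in \mathscr{P}_{constr}(Y)$ with $P(E) = 2\,\Ha^1(K)$; conversely, given $E \in \mathscr{P}_{constr}(Y)$ I project to obtain $K := p(\partial^* E) \cup S$, a connected competitor with $2\,\Ha^1(K) = P(E)$. Together these identities yield $\min \Ha^1 = \tfrac{1}{2} \min P$ and a bijection between the two sets of minimizers.

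For the lifting direction, after a possible relabeling of $S$ justified by Remark~\ref{independent}, I may arrange that the cut $\Sigma$ is contained in $K \cup S$. Then $\R^2 \setminus (K \cup S)$ splits into connected components, exactly one of which is unbounded. Condition $(ii)$ of Definition~\ref{zerounoconstr} forces the lift to sit on sheet $1$ over the unbounded component. I then extend the lift to each bounded component $U$ by declaring it to live on the sheet obtained by continuing along any path from a point of $U$ to the unbounded component. Well-definedness follows from the fact that any loop in $\R^2 \setminus (K \cup S)$ is nullhomotopic in $M = \R^2 \setminus S$ (because $K$ is connected and contains $S$, so every bounded complementary region encircles all of $S$ or none), hence the monodromy of $Y$ along such loops is trivial. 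By construction each piece of $K$ separates two complementary regions living on distinct sheets, so $P(E) = 2\,\Ha^1(K)$.

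For the projection direction, given $E \in \mathscr{P}_{constr}(Y)$ I set $K := p(\partial^* E) \cup S$. Constraint $(i)$ forces that, at $\Ha^1$-a.e.\ $x \in p(\partial^* E)$, the fibre $p^{-1}(x)$ meets $\partial^* E$ in exactly two points (one from the sheet currently selected by $E$, one from the sheet into which $E$ switches), and this yields $P(E) = 2\,\Ha^1(p(\partial^* E))$; constraint $(ii)$ confines $\partial^* E$ to $p^{-1}(\Lambda)$. The delicate point is the connectedness of $K$: if $K = K_1 \sqcup K_2$ were a non-trivial clopen decomposition separating the points of $S$, then I could exhibit a loop $\gamma$ in $\R^2$ disjoint from $p(\partial^* E)$ encircling only the $S$-points contained in $K_1$; any lift of $\gamma$ to $Y$ would then both stay inside the single-sheet set $E$ (by constraint $(i)$, since $\gamma$ misses $p(\partial^* E)$) and undergo a non-trivial sheet change (since $\gamma$ encircles a proper, non-empty subset of $S$), a contradiction.

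The main obstacle is precisely the connectedness step above. It hinges on the specific monodromy of the Amato--Bellettini--Paolini covering, namely that every proper, non-empty subset of $S$ induces a non-trivial deck transformation of $Y$; unpacking the equivalence relation $k \equiv j+i \pmod n$ shows that the loop encircling the points $p_1,\ldots,p_k$ with $1 \leq k \leq n-1$ acts as the cyclic permutation of order dividing $n/\gcd(k,n) > 1$, which never fixes sheet $1$. Once this topological input is granted, the rest of the argument reduces to standard facts about reduced boundaries of planar finite perimeter sets and their behaviour under a local homeomorphism.
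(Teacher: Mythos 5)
The paper does not actually prove this statement: its ``proof'' is the citation \cite[Theorem 2.30]{calicipi}, so there is no internal argument to compare against. Your outline follows the same general strategy as that reference (a two--sided correspondence with the factor $2$, with the monodromy of $Y$ as the mechanism linking connectedness of $K$ to the constraint $i)$ of Definition~\ref{zerounoconstr}), but several steps are either wrong as stated or genuinely incomplete.

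In the lifting direction: the claim that one can arrange $\Sigma\subset K\cup S$ is in general impossible, since the cuts $\Sigma_i$ must be pairwise non--intersecting injective curves while the arcs of a tree joining $p_i$ to $p_{i+1}$ necessarily overlap (already for the tripod on three points); luckily this is not needed. What is needed is that $p^{-1}(U)$ is a \emph{trivial} $n$--sheeted cover of each component $U$ of $\R^2\setminus K$, so that one may select one homeomorphic copy of $U$ per component. Your justification --- ``any loop in $\R^2\setminus(K\cup S)$ is nullhomotopic in $M$'' --- is false: a loop of winding number $1$ around all of $S$ is not nullhomotopic in $M$. The correct statement is that such a loop has the \emph{same} winding number $w$ around every $p_i$ (winding number is locally constant off the loop and $K$ is connected, contains $S$ and misses the loop), hence monodromy $j\mapsto j+nw\equiv j \pmod n$, which is trivial. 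Also, a globally well--defined sheet assignment across different components is neither available (any path between components must cross $K$, and the concatenated loop need not have trivial monodromy) nor needed: any choice of lift per component yields an admissible $E$. Finally $P(E)=2\,\Ha^1(K)$ is false in general (adjacent components may receive the same local sheet, and a ``hair'' of $K$ contributes nothing); only $P(E)\le 2\,\Ha^1(K)$ holds, which is fortunately the inequality your direction requires. In the projection direction the argument is only a sketch at the genuinely delicate points: $p(\partial^\ast E)$ is merely a Borel set, so the clopen decomposition and the existence of a separating loop $\gamma$ require first replacing it by a closed set of the same finite $\Ha^1$ measure; the inequality $\Ha^1(p(\partial^\ast E))\le\tfrac12 P(E)$ needs a fibre--counting argument (at least two points of $\partial^\ast E$ over a.e.\ point of the projection, via condition $i)$); and ``the lift of $\gamma$ stays inside $E$'' requires a tubular neighbourhood of the lift on which $|D\chi_E|$ vanishes, not merely that $\gamma$ misses $p(\partial^\ast E)$. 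Your closing monodromy computation is correct (only $|\mathcal I|$ matters and $j\mapsto j+|\mathcal I|$ is fixed--point free for $1\le|\mathcal I|\le n-1$) and is indeed the key topological input, but the measure--theoretic scaffolding around it is precisely what \cite[Theorem 2.30]{calicipi}, building on \cite{cover}, supplies and what is missing here.
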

\begin{proof}
See \cite[Theorem 2.30]{calicipi}.
\end{proof}

Once we have reduced the Steiner Problem to Problem~\eqref{minpro2},
a notion of calibration follows extremely naturally.

\begin{dfnz}[Calibration on coverings]\label{caliconvering}
Given $E\in\mathscr{P}_{constr}(Y)$,
a calibration for $E$
is an approximately regular vector field $\widetilde{\Phi} :Y\to\mathbb{R}^2$ (Definition \ref{approxi}) such that:
\begin{enumerate}
\item [(\textbf{1})] $\div\widetilde{\Phi} =0$;
\item [(\textbf{2})] $\vert \widetilde{\Phi} ^i (x) - \widetilde{\Phi} ^j (x)\vert \leq 2$ 
for every $i,j = 1,\ldots n$ and for every $x\in M$;
\item  [(\textbf{3})] $\int_{Y} \widetilde{\Phi}  \cdot D\chi_E=P(E)$.
\end{enumerate}
\end{dfnz}

As desired we have that 
if $\widetilde{\Phi} :Y\to\mathbb{R}^{2}$ is a calibration for $E$,
then $E$ is a minimizer of Problem~\eqref{minpro2}~\cite[Theorem 3.5]{calicipi}.
\medskip

We recall that we can reformulate the problem in terms of 
$BV$ functions with values in $\{0,1\}$:
we define $BV_{constr}(Y,\{0,1\})$ as the space of
functions $u \in BV(Y,\{0,1\})$ such that for almost every 
$x\in M $ it holds 
$\sum_{p(y) = x} u(y) = 1$
and $u^1(x) = 1$ for every $x\in \R^2 \setminus \Lambda$.
Then we minimize the total variation among functions in 
$BV_{constr}(Y,\{0,1\})$.

\subsection{Minimal partitions problem and paired calibrations \cite{lawmor}}\label{partizioni}

We provide here the definition of \emph{paired calibrations}~\cite{lawmor} 
of a minimal partition in the plane 
(see for example~\cite{chambollecremerspock} for this formulation).
To speak about minimal partitions and paired calibration 
we have to suppose that the points of 
$S$ lies on the boundary of an open smooth convex set $\Omega$.

We define
\begin{displaymath}
\mathcal{B} := \left\{u=(u_1,\ldots,u_n)\in BV(\Omega, \{0,1\}^n) \mbox{ such that } 
\sum_{i=1}^n u_i(x) = 1\ a.e.\mbox{ in } \Omega\right\}
\end{displaymath}
and a function $\overline{u} \in \mathcal{B}$ such that $\overline{u}_i = 1$ 
on the part of $\partial \Omega$ that connects $p_i$ with $p_{i+1}$. 

We then define the energy:
\begin{equation*}
\mathcal{E}(u):= \sum_{i=1}^n |Du_i|(\Omega)\,.
\end{equation*}

\begin{dfnz}\label{min}
A function $u_{min} \in \mathcal{B}$ is a minimizer 
for the partition problem if $u_{min} = \overline{u}$ on $\partial \Omega$ and
\begin{equation*}
\mathcal{E}(u_{min}) \leq \mathcal{E}(v)\,.
\end{equation*}
for every $v\in \mathcal{B}$ such that $v = \overline u$ on $\partial \Omega$.
\end{dfnz}

\begin{dfnz}[Paired calibration]\label{paired}
A \emph{paired calibration} for $u\in \mathcal{B}$ is a collection of 
$n$ approximately regular 
vector fields $\phi_1,\ldots,\phi_n: \Omega \rightarrow \R^2$ such that
\begin{itemize}
\item $\div \phi_i = 0$ \quad for every $i=1,\ldots,n$,
\item $|\phi_i - \phi_j|\leq 2$ \quad a.e. in $\Omega$ and for every $i,j =1,\ldots,n$,
\item $(\phi_i - \phi_j)\cdot \nu_{ij} = 2$ \quad $\Ha^1$--a.e. in  $J_{u_i} \cap J_{u_j}$ 
and for every $i,j =1,\ldots,n$,
\end{itemize}  
where $J_{u_i}$ is the jump set of the function $u_i$ and
$\nu_{ij}$ denotes the normal to $J_{u_i} \cap J_{u_j}$.
\end{dfnz} 

With this definition Morgan and Lawlor proved in~\cite{lawmor} that 
if there exists a paired calibration for a given  $u\in \mathcal{B}$,
then the latter is a minimizer of the minimal partition problem according to Definition~\ref{min}.
\medskip

Given $u=(u_1,\ldots,u_n)$ a minimizer for the partition problem 
the union of the jump sets of $u_i$ is a minimal Steiner network. 
Conversely given a minimal Steiner network $\mathcal{S}$ it is possible to construct 
$v =(v_1,\ldots,v_n) \in \mathcal{B}$ such that the union of the singular sets of $v_i$ 
is the network $\mathcal{S}$. Such a $v$ is a minimizer for the partition problem. 
Therefore Definition~\ref{paired} is a legitimate 
notion of calibration for the Steiner Problem as well.

\begin{rem}\label{pairedecovering}
Calibrations on coverings in Definition~\ref{caliconvering}
are a generalization of paired calibrations.
Indeed when the points $S$ lies on the boundary of a convex set
(the only case in which paired calibrations are defined)
the two notions are equivalent.

Suppose that the points of $S$ lie on the boundary of a convex set $\Omega$.
Then in the construction of $Y$ we can choose the cut $\Sigma$ 
outside $\Omega$.
Consider $u=(u_1,\ldots,u_n) \in \mathcal{B}$ a minimizer for the minimal partition problem 
and a paired calibration $(\phi_1,\ldots,\phi_n)$ for $u$.
Define then $\widetilde{u}\in BV_{constr}(Y,\{0,1\})$ 
prescribing the parametrization on each sheet of $Y$ as
\begin{equation*}
\widetilde{u}^i = u_{n+1-i} \qquad \mbox{for } i=1,\ldots,n\,.
\end{equation*}
Notice that with this choice 
$|D\widetilde u|(Y) = \mathcal{E}(u)$.
Define a vector field $\widetilde \Phi : Y \rightarrow \R^2$ 
prescribing its parametrizations on the sheets of the covering spaces (see Remark~\ref{presc}) as
\begin{equation*}
\widetilde{\Phi}^i=\phi_{n+1-i} \qquad \mbox{for } i=1,\ldots,n\,.
\end{equation*}
It is easy to check that $\widetilde{u}$ is a minimizer for Problem~\eqref{minpro2} and that
$\widetilde{\Phi}$ a calibration for $\widetilde{u}$ 
according to Definition~\ref{caliconvering}.

Similarly, given a calibration $\widetilde \Phi$ for $\widetilde u \in BV_{constr}(Y,\{0,1\})$ 
minimizer for Problem~\eqref{minpro2} one can construct a paired calibration for 
$u \in \mathcal{B}$ minimizer for the minimal partition problem. 
\end{rem}

\subsection{Currents with coefficients in $\mathbb{R}^n$  \cite{annalisaandrea}}\label{unocorrenti}

We briefly summarize here the theory of
currents with coefficients in $\mathbb{R}^n$ 
with the approach given in~\cite{annalisaandrea}.
The notion of
currents with coefficients in a group was introduced by W.~Fleming~\cite{fleming}. We mention also the 
the work of B.~White~\cite{white1, white2}.\\

Consider the normed space $(\mathbb{R}^n, \Vert\cdot\Vert)$ and 
denote by  $\Vert\cdot\Vert_{\ast}$ the dual norm.
For $k=0,1,2$ we call $\Lambda_k(\mathbb{R}^2)$ the space of $k$--vectors in $\R^2$.

\begin{dfnz}[$k$--covector with values in $\mathbb{R}^n$]
A $k$--covector with values in $\mathbb{R}^n$ is a linear map from 
$\Lambda_k(\mathbb{R}^2)$ to $\mathbb{R}^n$.
We denote by $\Lambda^k_{n}(\mathbb{R}^2)$
the space of $k$--covectors with values in $\mathbb{R}^n$.
\end{dfnz}

We define the comass norm of a covector 
$\omega\in \Lambda^k_{n}(\mathbb{R}^2)$  as
\begin{equation*}
\vert \omega\vert_{com}:=\sup\left\lbrace
\Vert \omega(\tau)\Vert_\ast\,:\;\tau\in \Lambda_k(\mathbb{R}^2)\;\text{with}\,
\vert \tau\vert\leq1 \mbox{ and } \tau \mbox{ simple}
\right\rbrace\,.
\end{equation*}

Then the $k$--forms with values in $\R^n$ are defined as the vector fields  
$\omega\in C^\infty_c(\mathbb{R}^2, \Lambda^k_{n}(\mathbb{R}^2))$ 
and their comass is given by 
\begin{equation*}
\Vert\omega\Vert_{com}:=\sup_{x\in\mathbb{R}^2} \vert\omega(x)\vert_{com}\,.
\end{equation*}

\begin{rem}
Notice that the definition of the space 
$C^\infty_c(\mathbb{R}^2, \Lambda^k_{n}(\mathbb{R}^2))$
is equivalent to the one presented in~\cite{annalisaandrea}. 
Indeed they consider $k$--covectors $\omega$ defined as bilinear maps
\begin{equation*}
\omega : \Lambda_k(\R^2) \times \R^n \rightarrow \R\,,
\end{equation*}
that can be seen as $k$--covectors with values in $(\R^n)'$.
\end{rem}

Thanks to the just defined notions we are able to introduce the definition 
of $k$--current with coefficients in $\R^n$.
\begin{dfnz}[$k$--current with coefficients in $\mathbb{R}^n$]
A $k$--current with coefficients in $\mathbb{R}^n$
is a linear and continuous map
\begin{equation*}
T:C^\infty_c(\mathbb{R}^2, \Lambda^k_{n}(\mathbb{R}^2))\to \mathbb{R}\,.
\end{equation*}
\end{dfnz}

The \emph{boundary} of a 
$k$--current $T$ with coefficients in $\mathbb{R}^n$  is a $(k-1)$--current
defined as
\begin{equation*}
\partial T(\omega):=-T(d\omega)\,,
\end{equation*}
where $d\omega$ is defined component--wise. 

\begin{dfnz}[Mass]
Given $T$ a $k$--current with coefficients in $\mathbb{R}^n$ its mass is
\begin{equation*}
\mathbb{M}(T):=\sup\left\lbrace T(\omega)\,:\;\omega\in 
C^\infty_c(\mathbb{R}^2, \Lambda^k_{n}(\mathbb{R}^2))
\;\text{with}\,\Vert\omega\Vert_{com}\leq 1
\right\rbrace\,.
\end{equation*}
\end{dfnz}

A $k$--current $T$ with coefficients in $\mathbb{R}^n$ is said to be \emph{normal}
if $\mathbb{M}(T)<\infty$ and $\mathbb{M}(\partial T)<\infty$.

\begin{dfnz}[$1$--rectifiable current with coefficients in $\mathbb{Z}^n$]
Given $\Sigma$ a $1$--rectifiable set oriented by $\tau\in\Lambda_1(\R^2)$, simple, such that
$\vert\tau(x)\vert=1$ for a.e. $x\in \Sigma$ and $\theta:\Sigma\to\mathbb{Z}^{n}$ in $L^1(\Ha^1)$,
a $1$--current $T$ is rectifiable  with coefficients in $\Z^n$  
if admits the following representation:
\begin{equation*}
T(\omega)=\int_{\Sigma} \left\langle
\omega(x)(\tau(x)), \theta(x)\right\rangle\,\mathrm{d}\mathcal{H}^1\,.
\end{equation*}
A $1$--rectifiable current with coefficients in $\mathbb{Z}^n$
will be denoted by the triple $T=[\Sigma, \tau, \theta]$.
\end{dfnz}

Notice that if $T=[\Sigma, \tau, \theta]$ is a 
$1$--rectifiable current with coefficients in $\mathbb{Z}^n$
one can write its mass as 
\begin{equation*}
\mathbb{M}(T)=\int_{\Sigma}\Vert \theta(x)\Vert\,\mathrm{d}\mathcal{H}^1\,.
\end{equation*}

\begin{rem}
The space of $1$--covector with values in $\R^n$ can be identified 
with the set of matrices $M^{n\times 2}(\R)$. 
In what follows we will assume this identification 
and we will denote the set of $1$--forms by $C_c^\infty(\mathbb{R}^2,M^{n\times 2}(\mathbb{R}))$.
Moreover given $\omega \in C_c^\infty(\mathbb{R}^2,M^{n\times 2}(\mathbb{R}))$ we write it as
\begin{equation*}
\omega=
\begin{bmatrix}
\omega_1(x) \\
\vdots \\
\omega_n(x) \\
\end{bmatrix}\,,
\end{equation*}
where $\omega_i : \R^2 \rightarrow \R^2$. Notice that $\omega_i(x)$ is a canonical $1$-form, hence its differential can be identified (by the canonical Hodge dual) as
\begin{displaymath}
d\omega_i = \frac{\partial \omega}{\partial x_2} - \frac{\partial \omega}{\partial x_1} = \div \omega_i^\perp   
\end{displaymath}
and therefore we can define $d\omega$ as
\begin{equation*}
d\omega=
\begin{bmatrix}
\div \omega^\perp_1 \\
\vdots \\
\div \omega^\perp_n \\
\end{bmatrix}\,.
\end{equation*}
\end{rem}

Let $(g_i)_{i=1,\ldots,n-1}$ be the canonical base of $\R^{n-1}$. 
Define $g_n = -\sum_{i=1}^{n-1} g_i$.

Given $B = g_1 \delta_{p_1} + \ldots + g_n \delta_{p_n}$ 
we consider the following minimization problem:
\begin{equation}\label{minprocurrents}
\inf \left\{\mathbb{M}(T)\  : \  T\mbox{ is a }  1-\text{rectifiable 
currents with coefficients if } \Z^{n-1} ,\ \partial T = B\right\}\,. 
\end{equation}

To have the equivalence between Problem~\eqref{minprocurrents}
and the Steiner Problem~\eqref{ste} the choice 
of the norm of $\mathbb{R}^{n-1}$ plays an important role. 
Indeed given $\mathcal{I}$ any subset of $\{1,\ldots,n-1\}$ it is required in \cite{annalisaandrea} that
\begin{equation}\label{prope}
\left\lVert \sum_{i\in \mathcal{I}} g_i\right\rVert =1\,.
\end{equation}

\begin{thm}\label{zu}
Choosing a norm satisfying \eqref{prope}, the Steiner Problem is equivalent to Problem \eqref{minprocurrents}.
\end{thm}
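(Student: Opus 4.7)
The plan is to establish the two inequalities $\inf\eqref{minprocurrents}\leq\inf\eqref{ste}$ and $\inf\eqref{ste}\leq\inf\eqref{minprocurrents}$ by constructing a competitor on one side out of a competitor on the other, in such a way that the mass of the current and the length of the associated network coincide (or can be controlled by each other). In both constructions, the role of \eqref{prope} is to force the comass of the relevant coefficient vectors to equal $1$.

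For $\inf\eqref{minprocurrents}\leq\inf\eqref{ste}$, I would start from any connected set $K\supset S$ of finite length and first replace it by a tree: removing any cycle inside $K$ keeps $K$ connected (since a cycle has trivial homology) and cannot increase $\mathcal{H}^1(K)$, so I can assume $K$ is a finite tree whose vertex set contains $S$. Rooting this tree at $p_n$, each edge $e$ of $K$ determines a subset $\mathcal{I}(e)\subset\{1,\dots,n-1\}$ consisting of the indices $i$ such that $p_i$ lies in the component of $K\setminus e$ not containing the root; I set $\theta_e:=\sum_{i\in\mathcal{I}(e)}g_i\in\Z^{n-1}$ and orient $e$ by $\tau_e$ pointing away from $p_n$. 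Defining $T:=\sum_e [e,\tau_e,\theta_e]$ and using $g_n=-\sum_{i<n}g_i$, a telescoping computation at each vertex gives $\partial T=B$. Hypothesis \eqref{prope} yields $\Vert\theta_e\Vert=1$ for every edge, hence $\mathbb{M}(T)=\sum_e\mathcal{H}^1(e)=\mathcal{H}^1(K)$.

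For the reverse inequality I would take an admissible $T=[\Sigma,\tau,\theta]$ and first decompose $\theta(x)=\sum_{i=1}^{n-1}\theta_i(x)g_i$ with $\theta_i\in\Z$, obtaining scalar integral $1$-currents $T_i:=[\Sigma,\tau,\theta_i]$ with $\partial T_i=\delta_{p_i}-\delta_{p_n}$. By the Federer-Smirnov decomposition for integral $1$-currents, each $T_i$ can be written as a simple arc $\gamma_i$ from $p_i$ to $p_n$ plus a boundary-free integer cycle; since removing the cycles and reducing multiplicities to $\{0,1\}$ can only decrease the mass, it is enough to treat the case in which each $T_i$ is a single unit arc $\gamma_i$. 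The set $K:=\bigcup_{i=1}^{n-1}\gamma_i$ is then a connected set (all $\gamma_i$ pass through $p_n$) containing $S$. At every $x\in K$ the coefficient of the reduced current is $\theta(x)=\sum_{i\in\mathcal{I}(x)}\pm g_i$ where $\mathcal{I}(x)=\{i:x\in\gamma_i\}$, and after reorienting pairs of paths traversing an edge in opposite directions (which, by minimality, one may assume does not happen, since cancellation would save mass), all signs agree so that $\theta(x)=\sum_{i\in\mathcal{I}(x)}g_i$. Then \eqref{prope} gives $\Vert\theta(x)\Vert=1$, whence $\mathcal{H}^1(K)=\int_{K}\Vert\theta\Vert\,\mathrm{d}\mathcal{H}^1\leq\mathbb{M}(T)$.

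The main obstacle is the last reduction step, that is, ensuring that after the decomposition one obtains arcs with mutually compatible orientations so that the coefficient at each point is a genuine atomic vector $\sum_{i\in\mathcal{I}}g_i$ rather than some other nonzero element of $\Z^{n-1}$ on which \eqref{prope} says nothing directly. Handling this cleanly requires combining Smirnov's decomposition of normal $1$-currents into elementary solenoids and arcs with a careful minimality/cancellation argument; once this structural reduction is in place, both inequalities follow from \eqref{prope} applied to the atoms.
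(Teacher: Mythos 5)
First, note that the paper does not actually prove Theorem~\ref{zu}: it imports it from \cite{annalisaandrea}, so your attempt has to be measured against the argument given there. Your first inequality, $\inf\eqref{minprocurrents}\leq\inf\eqref{ste}$, is correct and is exactly the standard construction (it also reappears verbatim in the paper's proof of Lemma~\ref{dellefoglie}): after reducing a competitor of \eqref{ste} to a tree rooted at $p_n$, the two paths $\gamma_i,\gamma_j$ to the root traverse their common part in the same direction, so the coefficient on each portion is a genuine subset sum $\sum_{i\in\mathcal{I}}g_i$ and \eqref{prope} converts mass into length.

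The second inequality contains a genuine gap, which you partly acknowledge. The chain ``Smirnov decomposition $\to$ discard cycles $\to$ reduce multiplicities to $\{0,1\}$ $\to$ reorient so all signs agree'' is not mass-decreasing in general: the mass is $\int_\Sigma\bigl\Vert\sum_i\theta_i(x)g_i\bigr\Vert\,d\Ha^1$, and modifying a single component $\theta_i$ (by subtracting a cycle whose multiplicity need not be sign-compatible with $\theta_i$, or by truncating it to $\pm1$) can \emph{increase} $\Vert\theta(x)\Vert$ at points where several components overlap, e.g.\ passing from $g_1+g_2$ to $2g_1+g_2$. Condition \eqref{prope} constrains the norm only on the $2^{n-1}-1$ subset sums and says nothing about such vectors, and the appeal to ``by minimality, cancellation would save mass'' presupposes both the existence of a minimizer and the very mass comparison being proved. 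More to the point, the reduction is unnecessary. The argument of \cite{annalisaandrea} is: write $T=\sum_i\theta_ig_i$ and set $T_i=[\Sigma,\tau,\theta_i]$; by linear independence of the $g_i$ one gets $\partial T_i=\delta_{p_i}-\delta_{p_n}$, so $p_i$ and $p_n$ cannot be separated by a hypersurface avoiding $\mathrm{supp}(T_i)\subset\mathrm{supp}(T)$, whence a connected component of $\overline{\mathrm{supp}(T)}\cup S$ is a competitor for \eqref{ste}. The only pointwise input then needed is $\Vert\theta(x)\Vert\geq 1$ wherever $\theta(x)\neq 0$, which for $\Vert\cdot\Vert_\flat$ is immediate from integrality of the coefficients; this is the extra property of the norm, beyond \eqref{prope}, that the lower bound really uses. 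Your strategy of forcing $\theta(x)$ to be exactly a subset sum is an attempt to extract that bound from \eqref{prope} alone, and that is precisely where the proof breaks down.
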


The notion of calibration associated to the mass minimization problem \eqref{minprocurrents} introduced in \cite{annalisaandrea} is the following:

\begin{dfnz}[Calibration for $1$--rectifiable currents]\label{calicurrents}
Let $T=[\Sigma, \tau, \theta]$ be a 
$1$--rectifiable current with coefficients in $\mathbb{Z}^{n-1}$ and 
$\Phi\in C_c^\infty(\mathbb{R}^2,M^{{n-1}\times 2}(\mathbb{R}))$. 
Then 
$\Phi$ is a calibration for $T$ if
\begin{itemize}
\item[(i)] $d\Phi = 0$;
\item[(ii)] $\|\Phi\|_{com} \leq 1$;
\item[(iii)]  $\langle \Phi(x)\tau(x),\theta(x)\rangle 
= \|\theta(x)\|$ for $\Ha^1$-a.e. $x \in \Sigma$.
\end{itemize}
\end{dfnz}

If $\Phi\in C_c^\infty(\mathbb{R}^2,M^{{n-1}\times 2}(\mathbb{R}))$ is a calibration
for $T=[\Sigma, \tau, \theta]$ a
$1$--rectifiable current with coefficients in $\mathbb{Z}^{n-1}$,
then $T$ is a minimizer of Problem~\eqref{minprocurrents}.
To be more precise $T$ is a minimizer among normal currents 
with coefficients in $\mathbb{R}^{n-1}$ \cite{annalisaandrea}.

\begin{rem}
In Proposition~\ref{approximately} in appendix we prove that is possible 
to weaken the regularity of the calibration $\Phi$ and consider 
$\Phi : \R^2 \rightarrow M^{{n-1}\times 2}(\mathbb{R})$ such that each row 
is an approximately regular vector field (see also \cite{annalisaandrea} for a definition of calibration with weaker regularity assumptions of the vector fields).
In the next section we assume implicitly that $\Phi$ is approximately regular. 
\end{rem}

\section{Relations among  the different notions of
calibrations}\label{equivalence}

We have already discussed the equivalence between paired calibrations
and calibrations on coverings (see Remark~\ref{pairedecovering}).
We focus now on the relation with Definition~\ref{calicurrents}.

Definition~\ref{calicurrents} is dependent on the norm of  $\R^n$.
Define $\Vert \cdot\Vert _{\flat}$ as
\begin{equation*}
\Vert x\Vert _{\flat}:=\sup_{x_i>0} x_i - \inf_{x_i\leq 0} x_i
\end{equation*}
for every $x\in \mathbb{R}^n$.
This is the norm considered by Marchese and Massaccesi~\cite{annalisaandrea} and in particular it satisfies property \eqref{prope}.
In~\cite{annalisaandrea}  it is also proved that
the dual norm $\Vert \cdot\Vert_{\flat,\ast}$ can be characterized as
follows:
\begin{equation}\label{dualnorm}
\Vert x\Vert _{\flat,\ast}= \max\left\{
\sum_{x_i > 0} x_i,
\sum_{x_i \leq 0}|x_i|
\right\}\,.
\end{equation}

\medskip

\textbf{From calibrations for currents to calibration on coverings}

\medskip 

From here on we endow $\mathbb{R}^n$
with $\Vert \cdot\Vert=\Vert \cdot\Vert _{\flat}$.
With this choice,
we show that if there exists a calibration for a
$1$--rectifiable current with coefficients in $\mathbb{Z}^{n-1}$,
then there exists a calibration for $E\in\mathscr{P}_{constr}$
minimizer for Problem~\eqref{minpro2}. 

\begin{lemma}\label{construction}
Given $S=\{p_1,\ldots,p_n\}$ with the points $p_i$
lying on the boundary of a convex set $\Omega$
labelled in an anticlockwise sense and $u=(u_1,\ldots,u_n)$
a competitor of the minimal partition problem, it is possible to
construct  a $1$--rectifiable current $T=[\Sigma, \tau, \theta]$
with coefficients in $\mathbb{Z}^{n-1}$ such that
$2\mathbb{M}(T)=\mathcal{E}(u)$,
$\partial T=g_1\delta_{p_1}+\ldots g_n\delta_{p_n}$
and for $\Ha^1$--a.e. $x\in J_{u_i} \cap J_{u_j}$
\begin{equation}\label{summ}
\theta(x) = \sum_{k=i}^{j-1} g_k\,.
\end{equation}
\end{lemma}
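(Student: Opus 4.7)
The plan is to build $T$ as a coordinate-wise superposition of the boundary currents of the nested super-level sets
\[
F_k \;:=\; \bigcup_{i=1}^{k}\{u_i=1\}, \qquad k=1,\dots,n-1,
\]
which satisfy $F_1\subset F_2\subset\cdots\subset F_{n-1}$ and whose traces on $\partial\Omega$ are prescribed by the boundary condition $u=\overline u$, namely $F_k\cap\partial\Omega$ is the arc joining $p_1$ and $p_{k+1}$. The monotonicity of this family is the decisive structural fact: it will allow a single orientation $\tau$ on each interface $J_{u_i}\cap J_{u_j}$ to be simultaneously compatible with the counter-clockwise boundary of \emph{every} $F_k$ with $i\le k<j$.

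Concretely, set $\Sigma:=\bigcup_{i<j}(J_{u_i}\cap J_{u_j})$, which is $1$-rectifiable. For $\mathcal{H}^1$-a.e. $x\in J_{u_i}\cap J_{u_j}$ with $i<j$, pick the unit normal $\nu_{ij}(x)$ pointing from $\{u_i=1\}$ to $\{u_j=1\}$, set $\tau(x):=R_{\pi/2}\nu_{ij}(x)$, and define
\[
\theta(x)\;:=\;\sum_{k=i}^{j-1}g_k\,\in\,\mathbb{Z}^{n-1}.
\]
Put $T:=[\Sigma,\tau,\theta]$. Whenever $i\le k<j$ we have $\{u_i=1\}\subset F_k$ and $\{u_j=1\}\subset F_k^{c}$, so $\nu_{ij}$ is exactly the outward normal to $F_k$ at $x$; hence $\tau$ is the ccw tangent to each $\partial F_k$ through $x$, simultaneously for all admissible $k$.

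For the mass identity, each $\theta(x)$ has coordinates in $\{0,1\}$, so by the definition of $\|\cdot\|_{\flat}$ one has $\|\theta(x)\|_{\flat}=1$ on $\Sigma$, whence
\[
\mathbb{M}(T)\;=\;\mathcal{H}^1(\Sigma)\;=\;\sum_{i<j}\mathcal{H}^1\bigl(J_{u_i}\cap J_{u_j}\bigr).
\]
Each interface lies in exactly two jump sets $J_{u_i}, J_{u_j}$, so $\mathcal{E}(u)=\sum_i\mathcal{H}^1(J_{u_i})=2\sum_{i<j}\mathcal{H}^1(J_{u_i}\cap J_{u_j})=2\mathbb{M}(T)$. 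For the boundary, decompose $T=\sum_{k=1}^{n-1}g_k\,T_k$ where the scalar current $T_k$ has unit density and tangent $\tau$ on $\bigsqcup_{i\le k<j}(J_{u_i}\cap J_{u_j})=\partial^*F_k\cap\Omega$. By the orientation coherence above, $T_k$ is precisely the interior part of the boundary current of $F_k$; combining this with $F_k\cap\partial\Omega=$ arc from $p_1$ to $p_{k+1}$ and the sign convention $\partial T(\omega)=-T(d\omega)$ of Section~\ref{unocorrenti}, $\partial T_k$ reduces to a combination of $\delta_{p_1}$ and $\delta_{p_{k+1}}$. Summing and applying $g_n=-\sum_{k=1}^{n-1}g_k$ telescopes to $\partial T=\sum_{l=1}^{n}g_l\,\delta_{p_l}$.

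The chief technical hurdle is the orientation bookkeeping: one must simultaneously guarantee that the prescribed $\tau$ produces $\theta$ with the correct sign on each $J_{u_i}\cap J_{u_j}$, that $\tau$ is consistent with the ccw orientation of \emph{every} $\partial F_k$ that crosses the point (so that the coordinate-wise decomposition $T=\sum g_k T_k$ is coherent), and that the telescoped sum of the $\partial T_k$ matches $\sum_l g_l\delta_{p_l}$ rather than a sign-flipped or cyclically relabelled version. All three compatibilities reduce to the nestedness $F_1\subset\cdots\subset F_{n-1}$ together with the assumption that $p_1,\ldots,p_n$ are labelled anticlockwise on $\partial\Omega$.
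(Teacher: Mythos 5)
Your construction is correct in substance and produces the same current as the paper's, but you reach it by a different decomposition. The paper works phase by phase: it sets $T_i=[\partial^\ast A_i,\tau_i,a_i]$ with vector multiplicities $a_i$ chosen so that $a_i-a_{i+1}=g_i$, sums $T=\sum_i T_i$, and reads off $\theta=a_i-a_j=\sum_{k=i}^{j-1}g_k$ on each interface from the cancellation of opposite orientations; the boundary then telescopes over the cycle $p_1,\dots,p_n$. You instead perform the Abel summation in advance, writing $T=\sum_k g_k\,T_k$ with $T_k$ the (interior) boundary current of the nested super-level set $F_k=\bigcup_{i\le k}A_i$; since $\sum_i a_i\chi_{A_i}=\sum_k g_k\chi_{F_k}$ when $a_n=0$, the two decompositions describe the same object. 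What your route buys is that the orientation coherence becomes transparent: the nestedness $F_1\subset\cdots\subset F_{n-1}$ forces a single tangent $\tau$ on $J_{u_i}\cap J_{u_j}$ to be compatible with every $\partial F_k$, $i\le k<j$, whereas the paper never has to compare orientations across different $k$ because each $T_i$ carries its own $\tau_i$. One caveat on the final step, which you assert rather than compute: with $\partial T_k=\pm(\delta_{p_1}-\delta_{p_{k+1}})$ the sum $\sum_k g_k\,\partial T_k$ telescopes to $\pm\bigl(g_n\delta_{p_1}+\sum_{k=1}^{n-1}g_k\delta_{p_{k+1}}\bigr)$, i.e.\ a cyclic shift of $\sum_l g_l\delta_{p_l}$; the paper's own last display has exactly the same off-by-one (its first term is $(a_1-a_n)\delta_{p_1}=-g_n\delta_{p_1}$, not $g_1\delta_{p_1}$), and in both cases it is repaired by relabelling the $g_i$ (or shifting the correspondence between phases and arcs), so this is a shared bookkeeping defect rather than a flaw specific to your argument.
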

\begin{proof}
For $i=1,\ldots,n$ let $A_i$ be the
phases of the partition induced by $u=(u_1,\ldots,u_n)$, that is
$u_i=\chi_{A_i}$.
Notice that for every $i\in\{1,\ldots,n\}$
the set $\partial^\ast A_i$ is a $1$--rectifiable set in $\R^2$
with tangent $\tau_i$ almost everywhere and it joins the points $p_i$ and
$p_{i+1}$.
For $i\in\{1,\ldots,n\}$
we define  $T_i=[\partial^\ast A_i, \tau_i, a_i]$,
where the multiplicities $a_i$ are chosen in such a way that
 $a_{i} - a_{i+1} = g_i$ for $i=1,\ldots, n-1$.
 Then
 for every $i,j= 1,\ldots, n$
\begin{equation}\label{molt}
a_i - a_j = \sum_{k=i}^{j-1} g_k\,.
\end{equation}
We set
\begin{equation*}
T = \sum_{i=1}^n T_i\,.
\end{equation*}
Denoting by $\theta_{T}$ the multiplicity of $T$,
by construction $\theta_{T}(x) = a_i - a_j$ for $\Ha^1$--a.e.
$x \in \partial^\ast A^i \cap \partial^\ast A^j$ that thanks to \eqref{molt} gives \eqref{summ}.
Moreover as $\partial T_{i} = a_i (\delta_{p_{i}} - \delta_{p_{i+1}})$
(with the convention that $p_{n+1} = p_1$).
We infer
\begin{eqnarray*}
\partial T &=& \sum_{i=1}^n \partial T_i
= \sum_{i=1}^n a_{i}(\delta_{p_{i}} - \delta_{p_{i+1}}) = \sum_{i=1}^n
a_{i}\delta_{p_{i}} - \sum_{i=1}^n a_{i}\delta_{p_{i+1}} \\
&=&  \sum_{i=1}^{n} a_{i}\delta_{p_{i}} - \sum_{i=2}^{n+1}
a_{i-1}\delta_{p_i} = (a_1 - a_n)\delta_{p_1}
+ \sum_{i=2}^{n} (a_i - a_{i+1}) \delta_{p_i}\\
&=& \sum_{i=1}^n  g_i \delta_{p_i}\,.
\end{eqnarray*}
\end{proof}

\begin{thm}\label{dacalicurrentacalicovering}
Given $S=\{p_1,\ldots,p_n\}$ lying on the boundary of a convex set $\Omega$,
let $E\in\mathscr{P}_{constr}$ be a
minimizer for Problem~\eqref{minpro2}.
Let $\Phi$ be a calibration
according to Definition~\ref{calicurrents}
for $T=[\Sigma,\tau,\theta]$ a $1$--rectifiable current with
coefficients in
$\mathbb{Z}^{n-1}$ minimizer of Problem~\eqref{minprocurrents}.
Then there exists $\widetilde{\Phi}$ a calibration for $E$
(according to Definition~\ref{caliconvering}).
\end{thm}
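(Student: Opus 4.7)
The plan is to build a calibration on the covering by passing through paired calibrations. By Remark~\ref{pairedecovering}, any paired calibration $(\phi_1,\ldots,\phi_n)$ for the minimal partition associated to $E$ lifts to a covering calibration via $\widetilde{\Phi}^i := \phi_{n+1-i}$, so it suffices to build $(\phi_1,\ldots,\phi_n)$ out of $\Phi$. The central formula is
\begin{equation*}
\phi_i(x) := 2\,(a_i^{\top}\Phi(x))^{\perp},\qquad i=1,\ldots,n,
\end{equation*}
where $^\perp$ denotes the $\pi/2$--rotation in $\mathbb{R}^2$ and $a_1,\ldots,a_n \in \mathbb{R}^{n-1}$ are the multiplicity vectors used in the proof of Lemma~\ref{construction}, chosen so that $a_i - a_{i+1} = g_i$ with the cyclic convention $a_{n+1}=a_1$.

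The three conditions of Definition~\ref{paired} can then be verified in turn. The identity $\div \phi_i = 0$ is a direct restatement of $d\Phi = 0$ row by row. For the bound $|\phi_i-\phi_j|\leq 2$ one writes $\phi_i - \phi_j = 2((a_i-a_j)^{\top}\Phi)^{\perp}$, estimates pointwise by the duality $|(a_i-a_j)^{\top}\Phi(x)\tau| \leq \|a_i-a_j\|_\flat\,\|\Phi(x)\tau\|_{\flat,\ast}$ for $|\tau|=1$, and uses both $\|\Phi\|_{com}\leq 1$ and the computation $\|a_i-a_j\|_\flat = \|\sum_{k=i}^{j-1} g_k\|_\flat = 1$, which is a direct consequence of the normalisation~\eqref{prope} and the explicit expression~\eqref{dualnorm}.

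The crux is the jump alignment $(\phi_i-\phi_j)\cdot\nu_{ij} = 2$ on $J_{u_i}\cap J_{u_j}$, where $u$ is the partition induced by $E$. Applying Lemma~\ref{construction} to $u$ produces a current $T' = [\Sigma',\tau',\theta']$ with $\theta'(x) = a_i - a_j$ at $\mathcal{H}^1$--a.e.\ interface point and $2\mathbb{M}(T') = P(E)$. Since $E$ is a minimiser of~\eqref{minpro2} and both~\eqref{minpro2} and~\eqref{minprocurrents} are equivalent to the Steiner problem (the latter by Theorem~\ref{zu}), $T'$ is itself a minimiser of~\eqref{minprocurrents}; the standard calibration argument then shows that $\Phi$ calibrates $T'$ as well. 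Condition~(iii) of Definition~\ref{calicurrents} applied to $T'$ yields $\langle \Phi(x)\tau'(x),a_i-a_j\rangle = 1$, and combining this with the identity $v^{\perp}\cdot w = -v\cdot w^{\perp}$ and the orientation relation between $\tau'$ and $\nu_{ij}$ delivers the required $+2$.

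I expect the main technical obstacle to be the bookkeeping of orientations: the anticlockwise labelling of the $a_i$'s, the rectifiable orientation $\tau'$ of $T'$, and the direction of the normal $\nu_{ij}$ in Definition~\ref{paired} must be chosen coherently so that the final sign is $+2$ rather than $-2$; a secondary subtlety is that the current $T$ directly calibrated by $\Phi$ need not coincide with $T'$, which is why the remark that calibrations validate every competitor attaining the minimum is needed. Once the paired calibration is established, the lift $\widetilde{\Phi}^i := \phi_{n+1-i}$ from Remark~\ref{pairedecovering} immediately provides a calibration of $\chi_E \in BV_{constr}(Y,\{0,1\})$, that is, of $E$.
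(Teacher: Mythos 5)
Your proposal is correct and follows essentially the same route as the paper: reduction to paired calibrations, defining the fields so that $\phi_i-\phi_j=2\bigl(\sum_{k=i}^{j-1}\Phi_k\bigr)^{\perp}$, deriving the $|\phi_i-\phi_j|\leq 2$ bound from $\|\Phi\|_{com}\leq 1$ together with $\|\sum_{k=i}^{j-1}g_k\|_\flat=1$, and obtaining the jump condition by applying Lemma~\ref{construction} to $E$ and observing that $\Phi$ must also calibrate the resulting minimizing current. Your explicit primitive $\phi_i=2(a_i^{\top}\Phi)^{\perp}$ and the duality estimate $|\langle\Phi\tau,a_i-a_j\rangle|\leq\|a_i-a_j\|_\flat\,\|\Phi\tau\|_{\flat,\ast}$ are only cosmetic variants of the paper's computation.
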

\begin{proof}
We label the $n$ points of $S$ in an anticlockwise sense.
By choosing the cuts $\Sigma\supset \Omega$ calibrations on coverings (Definition~\ref{caliconvering}) reduce to
paired calibrations (Definition~\ref{paired}).
Hence we are looking for a collection $\widetilde{\Phi}$
of $n$ vector fields  $\widetilde{\Phi}^i:\Omega\to \mathbb{R}^2$
for  $u=(u_1,\ldots,u_n)\in\mathcal{B}$ where $u_{n+1-i}=\chi_{E^i}$ (see
Remark \ref{pairedecovering}).

Let $\Phi_i$ be the $n-1$ rows of the matrix $\Phi$.
We claim that the collection
of $n$ vector fields $\widetilde{\Phi}^i$ defined by 
\begin{equation}\label{relation}
\widetilde{\Phi}^i-\widetilde{\Phi}^{i+1}=2\Phi_i^\perp \quad \mbox{for }
i=1,\ldots, n-1
\end{equation}
is a calibration for $E$. Notice that from~\eqref{relation} we deduce that
\begin{equation*}
\widetilde{\Phi}^{n}-\widetilde{\Phi}^1
=-2\sum_{i=1}^{n-1}\Phi_i^\perp\,.
\end{equation*}
We have to show that $\widetilde\Phi$ satisfies conditions (\textbf{1}),
(\textbf{2}), (\textbf{3}) of Definition \ref{caliconvering}.
\begin{itemize}
\item[(\textbf{1})] The divergence of $\widetilde\Phi$
is automatically zero, because $\div \widetilde \Phi^i = 0$ in $\Omega$
for every $i=1,\ldots,n$ (notice that we have taken the cut $\Sigma$
outside $\Omega$).
\item[(\textbf{2})] By Definition~\ref{calicurrents} it holds
that $\|\Phi\|_{com} \leq 1$, hence for every $x\in\mathbb{R}^2$ we have
\begin{equation*}
\sup\left\lbrace\Vert\Phi(x)\tau\Vert_{\flat, \ast}
\;\Big\vert\, \tau\in\mathbb{R}^2,\;\vert\tau\vert \leq 1\right\rbrace\leq
1\,.
\end{equation*}
Writing the $n$ components of
$\Phi(x)\tau$ as $\left\langle\Phi_1(x),\tau\right\rangle,
\ldots, \left\langle\Phi_n(x),\tau\right\rangle$ and
using~\eqref{dualnorm},
for every $\tau \in \R^2$ such that $|\tau| \leq 1$  and $i,j =
1,\ldots,n$ with $i \leq j-1$ we obtain
\begin{align*}
1 \geq &\Vert\Phi(x)\tau\Vert_{\flat, \ast}
=\max\left\lbrace
\sum_{\left\langle\Phi_k,\tau\right\rangle>0}
\left\langle\Phi_k,\tau\right\rangle,
-\sum_{\left\langle\Phi_k,\tau\right\rangle<0}\left\langle\Phi_k,\tau\right\rangle
\right\rbrace
\\
& \geq  \left\lvert \sum_{k=1}^{n-1}\left\langle\Phi_k,\tau\right\rangle
\right\rvert
\geq \left\lvert \left\langle
\sum_{k=i}^{j-1} \Phi_{k}, \tau
\right\rangle\right\rvert\,.
\end{align*}
Therefore
\begin{equation*}
\left|\sum_{k=i}^{j-1} \Phi_k^\perp\right| \leq 1\,.
\end{equation*}
Notice that from \eqref{relation} we obtain that for every
$i,j\in\{1,\ldots,n\}$
\begin{equation*}
\vert \widetilde{\Phi}^i-\widetilde{\Phi}^j\vert
=2\left\vert \sum_{k=i}^{j-1}\Phi_k^\perp\right\vert\,.
\end{equation*}
Hence
for every $i,j\in\{1,\ldots,n\}$ condition
$\vert \widetilde{\Phi}^i(x)-\widetilde{\Phi}^j(x)\vert\leq 2$ is
fulfilled for every $x\in \Omega$.
\item[(\textbf{3})]
We can apply
the construction of Lemma~\ref{construction} with  $E^i=A_{n+1-i}$ to
produce a $1$--rectifiable current $\overline{T}$
with coefficients in $\Z^{n-1}$ such that
and $\partial \overline{T} = \partial T$ and
 $2\mathbb{M}(\overline{T}) = P(E)$.
 Moreover
thanks to the fact that $E$ a minimizer for~\eqref{minpro2} and $T$
for~\eqref{minprocurrents}
we get
\begin{equation*}
2\mathbb{M}(\overline{T}) =
P(E)=2\mathcal{H}^1(\mathcal{S})=2\mathbb{M}(T)\,,
\end{equation*}
where $\mathcal{S}$ is a minimizer for the Steiner Problem~\eqref{ste}.
The current $\overline{T}$ has the same boundary and the same mass of $T$,
then it  is a minimizer for the Problem~\eqref{minprocurrents} as well.
Therefore $\Phi$ is a calibration also for $\overline{T}$.
Then we have that $\Ha^1$--a.e.
$x\in \overline{p(\partial^\ast E)}$
\begin{displaymath}
\langle \Phi \tau, \theta_{\overline{T}} \rangle = 1\,.
\end{displaymath}
Using~\eqref{summ}, for $\Ha^1$--a.e. $x \in \partial^\ast A^i \cap
\partial^\ast A^j$
the previous equation reads as
\begin{equation*}
1 = \langle \Phi \tau, \sum_{k=i}^{j-1} g_k \rangle = \sum_{k=i}^{j-1}
\langle \Phi \tau,  g_k \rangle =
\sum_{k=i}^{j-1} \langle \Phi_k, \tau\rangle=
\sum_{k=i}^{j-1} \langle \Phi^\perp_k, \nu_{ij}\rangle =
\frac{1}{2}\langle \widetilde{\Phi}^i  - \widetilde{\Phi}^j, \nu_{ij}
\rangle\,,
\end{equation*}
that is the third condition of the paired calibration, that in our setting
is equivalent to (\textbf{3}) of Definition \ref{caliconvering}.
\end{itemize}
 \end{proof}

When the points of $S$ do not lie on the boundary of a convex set,
we cannot take advantage of the equivalence between calibrations on coverings and 
paired calibrations 
(that are not defined if the points of $S$ are not on the boundary of a convex set).
Indeed in this case we look for a unique vector field $\widetilde{\Phi}:Y\to\mathbb{R}^2$
defined on the whole space $Y$ and 
satisfying the requirements of Definition~\ref{caliconvering}.
As one can guess from Step \textbf{(3)} in the proof of
Theorem~\ref{dacalicurrentacalicovering},
relations~\eqref{relation}
has to be satisfied locally around the jumps.
A clue of this fact is
the \emph{local} equivalence between the minimal partition problem
and Problem~\eqref{minpro2} that suggests a local
equivalence between calibrations on coverings and paired calibrations.
Thanks to Remark~\ref{presc}
once defined $\widetilde{\Phi}^i$
in each sheet
one can construct $\widetilde{\Phi}$,
but in doing such an extension/identification
procedure it  is not guaranteed that the divergence of 
$\widetilde{\Phi}$ is zero.
It seems to us that the existence of a calibration $\widetilde{\Phi}$
is plausible, but the extension of the field
has to be treated case by case.
At the moment we do not have a procedure to construct $\widetilde{\Phi}$  globally.

\medskip

\textbf{From calibrations on coverings to calibrations for currents}

\medskip

Given a calibration for $E\in\mathscr{P}_{constr}$ 
minimizer for Problem~\eqref{minpro2}
we want now to construct a calibration for $T$,
a $1$--rectifiable current with coefficients in $\mathbb{Z}^{n-1}$
minimizer for Problem~\eqref{minprocurrents}.

Notice that given any competitor $\overline{T}=[\overline{\Sigma},\overline{\tau},\overline{\theta}]$, testing condition ii) of Definition~\ref{calicurrents} on $\overline{T}$ reduces to show that 
\begin{equation*}
\left\langle\Phi(x)\overline{\tau}(x),\overline{\theta}(x)\right\rangle
\leq \Vert \overline{\theta}(x)\Vert_\flat \quad \mbox{for }\mathcal{H}^1-a.e \ x\in\overline{\Sigma}\, .
\end{equation*}
Moreover it suffices to evaluate
$\left\langle\Phi(x)\overline{\tau}(x),\cdot \right\rangle$
on the extremal points of the unit ball of the norm $\Vert\cdot\Vert_\flat$
that are $P_{\mathcal{I}} = \sum_{i\in\mathcal{I}} g_i$ for every $\mathcal{I}\subset\{1,\ldots,n-1\}$
(see~\cite[Example 3.4]{annalisaandrea}).
Hence proving Condition  ii) reduces to verify $2^{n-1}-1$ inequalities.
On the other hand Condition (\textbf{2}) of Definition~\ref{caliconvering}
requires to verify $\frac{n(n-1)}{2}$ inequalities.
Apart from the case of $2$ and $3$ points, 
Condition (\textbf{2}) of Definition~\ref{caliconvering} is weaker than
Condition ii) of Definition~\ref{calicurrents}. 
Hence in general one cannot construct a calibration for $T$
starting from a calibration for $E$.

\medskip

To restore an equivalence result we slightly change Problem~\eqref{minprocurrents}.

Define a norm $\Vert\cdot\Vert_\natural$ on $\mathbb{R}^{n-1}$
characterized by the property that its unit ball is the smallest such that
\begin{equation*}
\left\Vert \sum_{k=i}^{j-1} g_i \right\Vert_\natural=1 \quad\text{with}\ i\leq j-1\ ,\ i,j \in \{1,\ldots,n\}\,.
\end{equation*}

For $n=4$ (in this case the admissible coefficients are $g_1,g_2$ and $g_3$) the unit ball of the norm $\Vert\cdot\Vert_{\natural}$ is depicted in Figure~\ref{norm}. 
Notice that if we consider the norm $\Vert\cdot\Vert_\flat$, the mass of all curves appearing in Figure~\ref{exampleA} coincides with the length.
If instead we use the norm $\Vert\cdot\Vert_{\natural}$, the mass
of the curve with multiplicity $g_1+g_3$ is strictly bigger than its length. This is a still natural choice if we want to prove an equivalence with the Steiner problem as in Theorem \ref{zu} for the norm $\Vert\cdot\Vert_{\natural}$. Indeed for a specific labelling of the points,  the curve with multiplicity $g_1 + g_3$ has to lie outside the convex envelope of $p_1,\ldots,p_4$ and therefore the competitor on the rightmost of Figure~\ref{exampleA} cannot be a minimizer for the Steiner problem.

From now on we write either $\mathbb{M}_\natural$ or  $\mathbb{M}_\flat$
to distinguish when the mass is computed using 
either the norm $\Vert\cdot\Vert_\natural$ or $\Vert\cdot\Vert_\flat$. 


\begin{figure}[h!]
\begin{center}
\begin{tikzpicture}[scale=0.7]
\draw[black!50!white]
(0,0)--(0,5)
(0,0)--(-2.5,-4.33)
(0,0)--(4.33, -2.5);
(0,0)--(2.5,4.33)
(0,0)--(0,-5)
(0,0)--(-4.33, 2.5);
\draw[black]
(-1.5,-2.6)--(-1.5,0.4) %
(2.6, -1.5)--(2.6, 1.5) %
(0,3)--(-1.5,0.4)
(0,3)--(2.6, 1.5)
(-1.5,0.4)--(1.1,-1.09)
(2.6, 1.5)--(1.1,-1.09)
(1.1,-1.09)--(1.1,-4.09)
(-1.5,-2.6)--(1.1,-4.09)
(2.6, -1.5)--(1.1,-4.09);
\draw[black, dashed]
(0,-3)--(1.1,-4.09)%
(0,-3)--(1.5,-0.4)%
(0,-3)--(-2.6, -1.5)%
(1.5,-0.4)--(1.5,2.6)
(1.5,-0.4)--(-1.1,1.09)
(-2.6, -1.5)--(-1.1,1.09)
(-1.1,1.09)--(-1.1,4.09)
(2.6, -1.5)--(1.5,-0.4);
\draw[black]
(1.5,2.6)--(-1.1,4.09)
(-2.6,1.5)--(-1.1,4.09)
(1.5,2.6)--(2.6, 1.5)
(-2.6, -1.5)--(-2.6,1.5)
(0,3)--(-1.1,4.09) %
(-2.6, 1.5)--(-1.5,0.4)
(-2.6, -1.5)--(-1.5,-2.6);
\fill[black](0,-3)circle (1.7pt);
\fill[black](-2.6, 1.5)circle (1.7pt);
\fill[black](1.5,2.6)circle (1.7pt);
\fill[black](-2.6, -1.5)circle (1.7pt);
\fill[black](-1.1,1.09)circle (1.7pt);
\fill[black]((-1.1,4.09)circle (1.7pt);
\fill[black](0,3)circle (1.7pt);
\fill[black](2.6, 1.5)circle (1.7pt);
\fill[black](-1.5,0.4)circle (1.7pt);
\fill[black](1.1,-1.09)circle (1.7pt);
\fill[black](-1.5,-2.6)circle (1.7pt);
\fill[black](2.6, -1.5)circle (1.7pt);
\fill[black](1.1,-4.09)circle (1.7pt);
\path[font=\tiny]
(0.1,2.85)node[below]{$g_2$}
(2.6, 1.5)node[right]{$g_1+g_2$}
(-1.5,0.4)node[right]{$g_2+g_3$}
(1.1,-1.09)node[left]{$g_1+g_2+g_3$}
(-1.5,-2.6)node[left]{$g_3$}
(1.1,-4.09)node[below]{$g_1+g_3$}
(2.6, -1.4)node[right]{$g_1$};
\end{tikzpicture}\quad\quad\quad
\begin{tikzpicture}[scale=0.7]
\draw[black!50!white]
(0,0)--(0,5)
(0,0)--(-2.5,-4.33)
(0,0)--(4.33, -2.5);
\draw[black]
(-1.5,-2.6)--(-1.5,0.4)
(2.6, -1.5)--(2.6, 1.5)
(0,3)--(-1.5,0.4)
(0,3)--(2.6, 1.5)
(-1.5,0.4)--(1.1,-1.09)
(2.6, 1.5)--(1.1,-1.09)
(1.1,-1.09)--(-1.5,-2.6)
(1.1,-1.09)--(2.6, -1.5);
\draw[black, dashed]
(0,-3)--(1.5,-0.4)
(0,-3)--(-2.6, -1.5)
(1.5,-0.4)--(1.5,2.6)
(1.5,-0.4)--(-1.1,1.09)
(-2.6, -1.5)--(-1.1,1.09)
(-2.6,1.5)--(-1.1,1.09)
(1.5,2.6)--(-1.1,1.09)
(2.6, -1.5)--(1.5,-0.4);
\draw[black]
(1.5,2.6)--(0,3)
(-2.6, 1.5)--(0,3)
(-2.6, -1.5)--(-2.6,1.5)
(1.5,2.6)--(2.6, 1.5)
(-1.5,-2.6)--(0,-3)
(2.6, -1.5)--(0,-3)
(-2.6, 1.5)--(-1.5,0.4)
(-2.6, -1.5)--(-1.5,-2.6);
\fill[black](0,-3)circle (1.7pt);
\fill[black](-2.6, 1.5)circle (1.7pt);
\fill[black](1.5,2.6)circle (1.7pt);
\fill[black](-2.6, -1.5)circle (1.7pt);
\fill[black](-1.1,1.09)circle (1.7pt);
\fill[black](0,3)circle (1.7pt);
\fill[black](2.6, 1.5)circle (1.7pt);
\fill[black](-1.5,0.4)circle (1.7pt);
\fill[black](1.1,-1.09)circle (1.7pt);
\fill[black](-1.5,-2.6)circle (1.7pt);
\fill[black](2.6, -1.5)circle (1.7pt);
\path[font=\tiny, white]
(1.1,-4.09)node[below]{$g_1+g_3$};
\path[font=\tiny]
(0.1,2.85)node[below]{$g_2$}
(2.6, 1.5)node[right]{$g_1+g_2$}
(-1.5,0.4)node[right]{$g_2+g_3$}
(1.1,-1.09)node[left]{$g_1+g_2+g_3$}
(-1.5,-2.6)node[left]{$g_3$}
(2.6, -1.4)node[right]{$g_1$};
\end{tikzpicture}
\end{center}
\caption{Left: the unit ball of the norm $\Vert\cdot\Vert_\flat$. Right: the unit ball of the
norm $\Vert\cdot\Vert_\natural$.}\label{norm}
\end{figure}
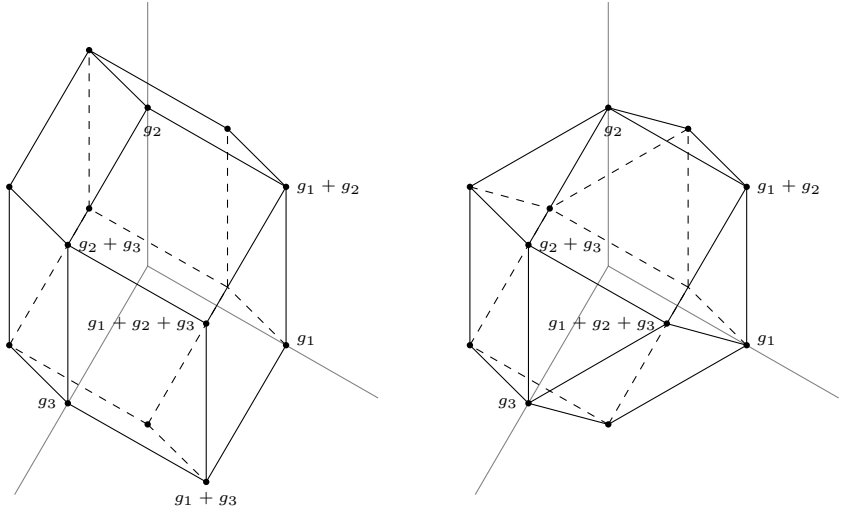


\begin{lemma}\label{dellefoglie}
There exists a permutation $\sigma$ of the labelling of the points of $S$ such that defining 
\begin{equation*}
B_\sigma = \sum_{i=1}^{n-1} g_{\sigma(i)} \delta_{p_{\sigma(i)}}
\end{equation*}
the problem
\begin{align}\label{sigmaminprocurrents}
\inf \left\{\mathbb{M}_\natural(T)\  :\  T=[\Sigma,\tau,\theta]  \mbox{ is a } 1-\text{rectifiable 
currents with coefficients in } \Z^{n-1}, \, \partial T = B_\sigma\right\}
\end{align}
is equivalent to the Steiner Problem.
\end{lemma}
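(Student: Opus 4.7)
The strategy is to prove two matching inequalities for the infimum in \eqref{sigmaminprocurrents}.

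\textbf{Lower bound.} By property \eqref{prope}, every extremal vector $\sum_{k=i}^{j-1} g_k$ of the unit ball of $\Vert\cdot\Vert_\natural$ lies in the unit ball of $\Vert\cdot\Vert_\flat$, so the former is contained in the latter and $\Vert\cdot\Vert_\natural \geq \Vert\cdot\Vert_\flat$ pointwise. Therefore $\mathbb{M}_\natural(T) \geq \mathbb{M}_\flat(T)$ for every admissible $T$. Since the Steiner length is independent of the labeling of $S$, Theorem~\ref{zu} applied to the relabeled system gives $\mathbb{M}_\flat(T) \geq \Ha^1(\mathcal{S})$ for every $\sigma$, where $\mathcal{S}$ denotes a Steiner minimizer.

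\textbf{Matching upper bound.} I would use a Steiner minimizer $\mathcal{S}$ to produce simultaneously the permutation $\sigma$ and a saturating competitor $T_0$. Viewing $\mathcal{S}$ as an embedded planar tree, perform an Euler tour (at each Steiner point visiting the incident edges in the planar cyclic order induced by the embedding) and record the terminals in the order they are first met; this yields a cyclic ordering of $\{p_1,\ldots,p_n\}$ with the property that, for every edge $e$ of $\mathcal{S}$, the two subtrees obtained by removing $e$ split the terminals into two complementary cyclic arcs. Choose $\sigma$ so that $p_{\sigma(1)},\ldots,p_{\sigma(n)}$ follows this cyclic ordering. Then, in complete analogy with Lemma~\ref{construction}, assemble a $1$-rectifiable current $T_0$ supported on $\mathcal{S}$ with $\partial T_0 = B_\sigma$ and multiplicity $\theta(x) = \sum_{k \in \mathcal{I}(x)} g_k$ on each edge, where $\mathcal{I}(x) \subset \{1,\ldots,n\}$ indexes the terminals lying on the chosen side of the oriented edge through $x$. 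By construction each $\mathcal{I}(x)$ is a cyclic arc and, using $\sum_{k=1}^n g_k = 0$, one verifies that $\theta(x) = \pm \sum_{k=a}^{b-1} g_k$ for some $1 \leq a \leq b-1 \leq n-1$, an extremal vector of the $\natural$ unit ball. Hence $\Vert \theta(x) \Vert_\natural = 1$ for $\Ha^1$-a.e.\ $x \in \mathcal{S}$ and $\mathbb{M}_\natural(T_0) = \Ha^1(\mathcal{S})$, matching the lower bound.

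The \emph{main obstacle} is justifying the existence of the cyclic ordering of the terminals compatible with edge-separation. This is a classical consequence of the planar embedding of $\mathcal{S}$ (essentially, the boundary of a fattened neighborhood of $\mathcal{S}$ is a simple closed curve along which the terminals appear in the claimed cyclic order), but a careful treatment requires handling the trivalent Steiner points and making precise the local cyclic order on the edges incident to each internal vertex induced by the planar embedding. A secondary subtlety is that the Steiner minimizer need not be unique, so the construction yields $\sigma$ attached to one specific $\mathcal{S}$.
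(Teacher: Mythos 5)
Your proposal is correct and follows essentially the same route as the paper: the paper also obtains $\sigma$ from the cyclic order of the terminals induced by the planar embedding of a Steiner tree (via the construction of Proposition 2.28 in the covering-space reference), builds a competitor supported on $\mathcal{S}$ whose multiplicities are consecutive sums $\sum_{k=i}^{j-1}g_k$ (by summing branch currents from $p_i$ to $p_n$ with multiplicity $g_i$), and concludes with the same chain $\mathbb{M}_\natural(T_\sigma)=\mathbb{M}_\flat(T_\sigma)\leq\mathbb{M}_\flat(T)\leq\mathbb{M}_\natural(T)$ using \eqref{minor}. The "main obstacle" you flag is exactly what the paper outsources to that cited proposition, so your treatment is, if anything, more explicit.
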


Define
\begin{equation*}
\mathcal{G}:=\left\{\sum_{k=i}^{j-1}g_k : i,j = 1,\ldots,n \mbox{ and } i \leq j -1\right\}
\end{equation*}

and notice that by the definition of $\Vert\cdot\Vert_\natural$ one has 
\begin{equation}\label{minor}
\Vert\theta\Vert_\natural \geq \Vert\theta\Vert_\flat \quad \forall \theta \in \R^{n-1} \qquad \mbox{and} \qquad \Vert\theta\Vert_\natural = \Vert\theta\Vert_\flat=1 \quad\forall \theta \in \mathcal{G}.
\end{equation}

We obtain the definition of calibration for Problem~\eqref{sigmaminprocurrents} simply repeating
Definition~\ref{calicurrents} 
replacing $\Vert\cdot\Vert_\flat$ by $\Vert\cdot\Vert_\natural$. 
Clearly if $\Phi$ is a calibration for  $T_\sigma$, then 
$T_\sigma$ is a minimizer for  Problem~\eqref{sigmaminprocurrents} 
in its homology class.
We postpone the proof of Lemma \ref{dellefoglie} and we state the main result.

\begin{thm}
Given $S=\{p_1,\ldots,p_n\}$,
let $E\in\mathscr{P}_{constr}$ be a
minimizer for Problem~\eqref{minpro2}
and  $T=[\Sigma,\tau,\theta]$ be a $1$--rectifiable current with coefficients in 
$\mathbb{Z}^{n-1}$ minimizer of Problem~\eqref{sigmaminprocurrents}.
Suppose that there exists $\widetilde{\Phi}$ calibration for $E$
according to Definition~\ref{caliconvering}, then 
there exists $\Phi$  calibration for $T_\sigma$
according to Definition~\ref{calicurrents} (where we consider 
$\Vert\cdot\Vert=\Vert\cdot\Vert_\natural$).
\end{thm}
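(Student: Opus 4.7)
\textbf{The plan} is to reverse the correspondence established in Theorem~\ref{dacalicurrentacalicovering}. First pick the permutation $\sigma$ provided by Lemma~\ref{dellefoglie} and, after relabelling, assume $\sigma=\mathrm{id}$ so that $B_\sigma=\sum_{i=1}^{n-1} g_i \delta_{p_i}$. Given the calibration $\widetilde{\Phi}$ with sheet parametrizations $\widetilde{\Phi}^j:M\to\R^2$, define the rows of the candidate $\Phi:\R^2\to M^{(n-1)\times 2}(\R)$ by inverting relation~\eqref{relation}, namely
\begin{equation*}
\Phi_i:=\tfrac{1}{2}\bigl(\widetilde{\Phi}^{i+1}-\widetilde{\Phi}^{i}\bigr)^{\perp}\qquad\text{for }i=1,\ldots,n-1.
\end{equation*}
The job is then to verify, one by one, the three conditions of Definition~\ref{calicurrents} with respect to the norm $\Vert\cdot\Vert_\natural$.

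Condition (ii) is the cleanest, and it is here that using $\Vert\cdot\Vert_\natural$ rather than $\Vert\cdot\Vert_\flat$ is essential. By duality, the bound $\Vert\Phi(x)\tau\Vert_{\natural,\ast}\leq 1$ for every unit $\tau$ is equivalent to $\bigl|\langle\sum_{k=i}^{j-1}\Phi_k(x),\tau\rangle\bigr|\leq 1$ for every $1\le i<j\le n$, since the extremal points of the unit ball of $\Vert\cdot\Vert_\natural$ are precisely the elements of $\mathcal{G}$. Applying the telescoping identity $\sum_{k=i}^{j-1}\Phi_k=\tfrac{1}{2}\bigl(\widetilde{\Phi}^{j}-\widetilde{\Phi}^{i}\bigr)^{\perp}$ reduces this to Condition~(\textbf{2}) of Definition~\ref{caliconvering}. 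For Condition (iii), I would produce an auxiliary current $\overline{T}$ by adapting Lemma~\ref{construction} to the general (non--convex) configuration: its multiplicity on $\partial^\ast E^i\cap\partial^\ast E^j$ is $\sum_{k=i}^{j-1}g_k\in\mathcal{G}$, which has $\Vert\cdot\Vert_\natural$-norm equal to $1$ by~\eqref{minor}. Expanding $\langle\Phi\tau,\theta_{\overline{T}}\rangle$ on each jump via the same telescoping identity and matching masses via the minimality of $E$ and $T_\sigma$, the calibration equality reduces to the covering identity $\int_Y\widetilde{\Phi}\cdot D\chi_E=P(E)$.

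\textbf{The main obstacle} is Condition (i), namely $d\Phi=0$ on all of $\R^2$. Componentwise, $d\Phi_i=\div\Phi_i^{\perp}=\tfrac{1}{2}\bigl(\div\widetilde{\Phi}^{i+1}-\div\widetilde{\Phi}^{i}\bigr)$, which vanishes on $M\setminus\Sigma$ because $\div\widetilde{\Phi}=0$ on $Y$ implies a vanishing divergence on the interior of each sheet. The difficulty lies across the cuts $\Sigma_k$ and near the points of $S$: the sheet parametrizations are identified across $\Sigma_k$ according to the rule $j\equiv j+k\pmod n$, so the jumps of $\widetilde{\Phi}^{i+1}-\widetilde{\Phi}^{i}$ across $\Sigma_k$ are a priori nonzero and could produce a singular divergence supported on the cuts or on $S$. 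The permutation $\sigma$ in Lemma~\ref{dellefoglie} is chosen precisely so that these singular contributions match exactly the boundary $B_\sigma$ of $T_\sigma$ and cancel in the interior: showing this cancellation rigorously, and confirming that the approximate regularity of $\widetilde{\Phi}$ is inherited by each row $\Phi_i$, is what I expect to absorb the bulk of the proof.
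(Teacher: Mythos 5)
Your construction of $\Phi$, the duality argument for condition (ii) via the extremal points of the $\Vert\cdot\Vert_\natural$--ball, and the mass--matching argument for condition (iii) are exactly the paper's proof (up to a harmless sign/rotation convention in inverting~\eqref{relation}). The one place you diverge is condition (i), and it is worth being precise about what happens there. The paper's proof opens with ``for simplicity we suppose that the $n$ points lie on the boundary of a convex set $\Omega$'' and chooses the cut outside $\Omega$; under that hypothesis the covering calibration reduces to a paired calibration, each $\widetilde{\Phi}^i$ is a single divergence--free field on $\Omega$ with no identifications to worry about, and $d\Phi_i=\tfrac12\,\div(\widetilde{\Phi}^i-\widetilde{\Phi}^{i+1})^{\perp\,\perp}=0$ is indeed trivial --- so in the convex case your argument, like the paper's, is complete. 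You instead attack the general configuration and correctly identify that the differences $\widetilde{\Phi}^{i+1}-\widetilde{\Phi}^{i}$ can acquire singular divergence across the cuts $\Sigma_k$ because of the sheet identifications; but you then only \emph{announce} that the permutation $\sigma$ makes these contributions cancel or match $B_\sigma$, without proving it. That cancellation is not established anywhere in the paper either (the authors explicitly admit, in the discussion following Theorem~\ref{dacalicurrentacalicovering}, that they lack a global procedure for gluing fields across sheets in the non--convex case), so as written your proposal proves the theorem only under the same convexity restriction the paper imposes; the general case remains a genuine gap in your argument, one you have honestly flagged rather than closed.
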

\begin{proof}
For simplicity we suppose that the $n$ points lie on the boundary of a 
convex set $\Omega$ and the cuts $\Sigma$ are chosen such that $\Sigma \supset \Omega$. Hence $\widetilde{\Phi}:Y\to\mathbb{R}^2$ reduces to 
a paired calibration: a collection of
$n$ approximately regular vector fields $\widetilde{\Phi}^i: \Omega \to\mathbb{R}^2$ satisfying the conditions of Definition \ref{paired}.
We define $\Phi$ calibration for $T_\sigma$ as the matrix 
whose $n-1$ rows satisfy
\begin{equation*}
\Phi^\perp_i=\frac{1}{2}\left(\widetilde{\Phi}^i-\widetilde{\Phi}^{i+1}\right)
\quad\text{for}\;i=1,\ldots,n-1\,.
\end{equation*}
Condition i) is trivially satisfied and 
adapting the proof of step \textbf{(3)} of Theorem~\ref{impli}
we also get Condition iii).
To conclude the proof it is enough to notice that when $\mathbb{R}^n$
is endowed with $\Vert\cdot\Vert_\natural$, condition
$\Vert\Phi\Vert_{com}\leq 1$ is fulfilled 
if $\left\vert\sum_{k=i}^{j-1}\Phi^\perp_k\right\vert\leq 1$
for every $i\leq j-1 \in\{1,\ldots,n\}$, that is nothing else than
$\vert\widetilde{\Phi}_i-\widetilde{\Phi}_{j}\vert\leq 2$.
\end{proof}

We conclude this section proving Lemma \ref{dellefoglie}.

\medskip

\textit{Proof of Lemma~\ref{dellefoglie}.}

\medskip

Denoted by $\mathcal{S}$ a Steiner network connecting the points
of $S$ we repeat the construction of~\cite[Proposition 2.28]{calicipi}
obtaining a suitable labelling of the points of $S$ (and consequently $B_\sigma$). It is possible then to construct a current $T_\sigma=[\Sigma_\sigma,\tau_\sigma,\theta_\sigma]$ 
with boundary $B_\sigma$ such that $\theta_\sigma\in\mathcal{G}$
and $\mathbb{M}_\flat(T_\sigma)=\mathcal{H}^1(\mathcal{S})$: it is enough to define $T_i$ as the $1$--current  supported on the branch of $\mathcal{S}$ connecting $p_i$ with $p_n$ with multiplicity $g_i$ and then build $T_\sigma = \sum_{i=1}^{n-1} T_i$.

Hence by the equivalence between the Steiner problem and Problem~\eqref{minprocurrents}
the current $T_\sigma=[\Sigma_\sigma,\tau_\sigma,\theta_\sigma]$  
is a minimizer for Problem~\eqref{minprocurrents} with $\mathbb{M}= \mathbb{M}_\flat$ and $B = B_\sigma$.
We show that $T_\sigma$ is a minimizer also for Problem~\eqref{sigmaminprocurrents}.
By minimality of $T_\sigma$ it holds 
$\mathbb{M}_{\flat}(T_{\sigma}) \leq \mathbb{M}_{\flat}(T)$
for all $1$--rectifiable current $T$  with coefficients in $\mathbb{Z}^{n-1}$.
Then for all competitors $T$ it holds
\begin{equation*}
\mathbb{M}_\natural(T_\sigma)
=\mathbb{M}_\flat(T_\sigma)\leq \mathbb{M}_\flat(T)
\leq \mathbb{M}_\natural(T)\,,
\end{equation*}
where we used \eqref{minor}. This gives the minimality of $T_\sigma$ for Problem~\eqref{sigmaminprocurrents} 
and concludes the proof.
\qed

\begin{figure}[H]
\begin{tikzpicture}[scale=1.3]
\path[font=\footnotesize]
(1,1) node[above]{$p_2$}
(1,-1) node[below]{$p_3$}
(-1,-1) node[below]{$p_4$}
(-1,1) node[above]{$p_1$}
(0,0) node[right]{$g_1+g_2$}
(0.7,0.5) node[above]{$g_2$}
(0.7,-0.45) node[below]{$g_3$}
(-1.2,-0.45) node[below]{$g_1+g_2+g_3$}
(-0.7,0.5) node[above]{$g_1$};
\fill[black](1,1) circle (1.7pt);    
\fill[black](1,-1) circle (1.7pt);    
\fill[black](-1,1) circle (1.7pt);    
\fill[black](-1,-1) circle (1.7pt);    
\draw[rotate=90]
(1,-1)--(0.42,0)
(1,1)--(0.42,0)
(0.42,0)--(-0.42,0)
(-0.42,0)--(-1,-1)
(-0.42,0)--(-1,1);
\end{tikzpicture}\qquad
\begin{tikzpicture}[scale=1.3]
\path[font=\footnotesize]
(1,1) node[above]{$p_2$}
(1,-1) node[below]{$p_3$}
(-1,-1) node[below]{$p_4$}
(-1,1) node[above]{$p_1$}
(0,0) node[above]{$g_2+g_3$}
(0.6,0.5) node[above]{$g_2$}
(0.6,-0.45) node[below]{$g_3$}
(-1.5,-0.45) node[below]{$g_1+g_2+g_3$}
(-0.6,0.5) node[above]{$g_1$};
\fill[black](1,1) circle (1.7pt);    
\fill[black](1,-1) circle (1.7pt);    
\fill[black](-1,1) circle (1.7pt);    
\fill[black](-1,-1) circle (1.7pt);    
\draw[rotate=0]
(1,-1)--(0.42,0)
(1,1)--(0.42,0)
(0.42,0)--(-0.42,0)
(-0.42,0)--(-1,-1)
(-0.42,0)--(-1,1);
\end{tikzpicture}\qquad
\begin{tikzpicture}[scale=1.3]
\path[font=\footnotesize]
(1,1) node[above]{$p_2$}
(1,-1) node[below]{$p_3$}
(-1,-1) node[below]{$p_4$}
(-1,1) node[above]{$p_1$}
(0,0) node[below]{$g_1+g_3$}
(0.4,0.6) node[above]{$g_2$}
(1.1,-0.45) node[below]{$g_3$}
(-1.5,-0.45) node[below]{$g_1+g_2+g_3$}
(-0.2,1) node[above]{$g_1$};
\fill[black](1,1) circle (1.7pt);    
\fill[black](1,-1) circle (1.7pt);    
\fill[black](-1,1) circle (1.7pt);    
\fill[black](-1,-1) circle (1.7pt);    
\draw[rotate=0]
(-0.42,0)to[out= 120,in=-120, looseness=1](1,1)
(-1,1)to[out= 30,in=90, looseness=1](1.3,1.1)
(1.3,1.1)to[out= -90,in=0, looseness=1](0.42,0)
(1,-1)--(0.42,0)
(0.42,0)--(-0.42,0)
(-0.42,0)--(-1,-1);
\end{tikzpicture}
\caption{With a suitable choice of the labeling of the points 
$p_1,p_2,p_3,p_4$ it is not possible to construct a competitor without loops
that lies in the convex envelop of the points
where a curve has coefficient $g_1+g_3$.}\label{exampleA}
\end{figure}
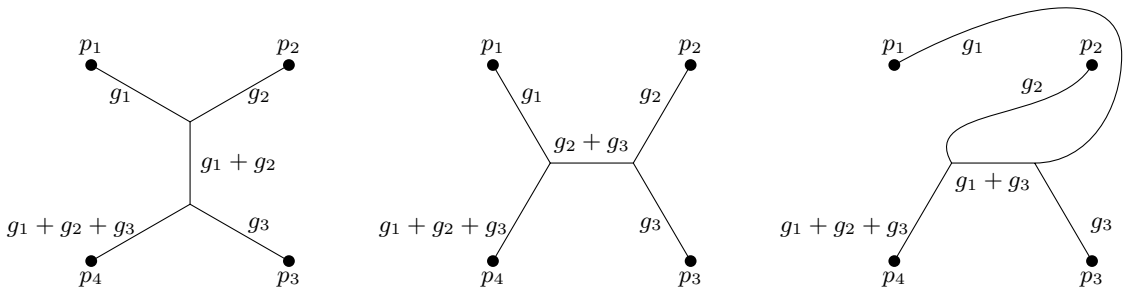

\begin{rem}
Although in general giving explicitly
the permutation $\sigma$ is quite hard,
the choice of a suitable labelling of the  points of $S$
becomes easy when the points 
lie on the boundary of a convex set:
it is enough to label $p_1\ldots,p_n$ in a anticlockwise sense (see Lemma \ref{construction}).
\end{rem}

\subsection{Extension to $\mathbb{R}^n$}

This paper is devoted to compare the 
known notions of calibrations for minimal networks and minimal
partitions in $\mathbb{R}^2$.

\textit{Minimal partitions}

A natural question would be if it is possible to
generalize/modify the mentioned approaches to 
minimal partition problems in higher dimension
with the goal of comparing the related notions of calibrations. 

Paired calibrations are already a tool to validate the minimality
of a certain candidate
 that span a given boundary and
divide the domain (a convex set in $\mathbb{R}^{n+1}$) 
in a fixed number of phases.
At the moment
it is not known if
one can find a suitable group $\mathcal{G}$ and a suitable norm
such that 
$n$--dimensional currents with coefficients in $\mathcal{G}$
represent a partition of $\mathbb{R}^{n+1}$.
Regarding instead the covering spaces approaches 
several attempts to very specific problems 
have been proposed in~\cite{cover,cover2,brakke}.
Despite this remarkable list of examples 
it is still not clear if it is possible to systematically approach 
Plateau's type problems and partition problems within the 
covering space setting.

\textit{Minimal networks}
 
Because of the intrinsic nature of the notion if currents, 
$1$--currents with group coefficients describe networks in any codimension.
Hence this approach is suitable for the Steiner problem in $\mathbb{R}^n$.

To conclude,
Section~\ref{unocorrenti}
is about minimizing $1$--dimensional objects in codimension $n$,
instead Section~\ref{partizioni}
regards the minimization of 
$n$--dimensional objects in codimension $1$.
Clearly $n=2$ is the only case in which the two are comparable.

\section{Convexifications of the problem}\label{convexification}

\begin{dfnz}
We call 
\begin{itemize}
\item   $BV_{constr}(Y, [0,1])$
the space of
functions $u \in BV(Y,[0,1])$ such that for almost every 
$x\in M $ it holds 
$\sum_{p(y) = x} u(y) = 1$
and $u^1(x) = 1$ for every $x\in \R^2 \setminus \Omega$.
\item  $BV_{constr}^\#(Y)$ the space of functions 
in $BV_{constr}(Y, [0,1])$  with a finite number of values
$\alpha_1,\ldots,\alpha_k$.
\end{itemize}
\end{dfnz}

In~\cite{calicipi} we have proven that if $\Phi$ is a calibration for 
$u\in BV_{constr}(Y,\{0,1\})$, then $u$ is a minimizer in the same class,
but actually the following  holds:
\begin{thm}\label{impli}
If $\Phi :Y\to\mathbb{R}^{2}$ is a calibration for $u\in BV_{constr}(Y,\{0,1\})$,
then $u$ is a minimizer among all functions in $BV_{constr}^\#(Y)$.
\end{thm}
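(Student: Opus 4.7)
I will adapt the minimality argument from the binary case (\cite[Theorem 3.5]{calicipi}) by isolating the only place where $u\in\{0,1\}$ is essential, and showing that the extension to finitely--many--valued competitors only requires a slightly sharper pointwise estimate on the jump set, still controlled by condition $(\mathbf 2)$.

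\emph{Step 1: the calibrated integral is invariant in $BV_{constr}^\#(Y)$.} Given $v\in BV_{constr}^\#(Y)$, set $w:=v-u$. The sum constraint and the condition at infinity (which, together with $v^j\in[0,1]$, forces $v^1=1$ and $v^j=0$ for $j\ge 2$ outside $\Omega$, and similarly for $u$) imply that $w$ is compactly supported in $Y$ and satisfies $\sum_{p(y)=x}w(y)=0$. Because $\widetilde\Phi$ is approximately regular and $\div\widetilde\Phi=0$, the distributional integration by parts (carried out exactly as in \cite{calicipi} for the binary case) yields $\int_Y\widetilde\Phi\cdot Dw=0$, hence
\[ \int_Y \widetilde\Phi\cdot Dv \;=\; \int_Y \widetilde\Phi\cdot Du \;=\; P(E) \;=\; |Du|(Y), \]
where the second equality is condition $(\mathbf 3)$.

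\emph{Step 2: calibration bound on the jump set.} Since $v$ has finitely many values, $Dv$ is concentrated on its jump set $J_v\subset Y$. Parametrizing $J_v$ through its projection on $M$, for $\Ha^1$--a.e.\ $x\in p(J_v)$ denote by $J_x\subset\{1,\dots,n\}$ the set of sheets on which $v^j$ jumps at $x$, and set $a_j:=v^j(x^+)-v^j(x^-)$, $c_j:=\widetilde\Phi^j(x)\cdot\nu(x)$. The constraint $\sum_j v^j\equiv1$ on both sides of the jump gives $\sum_{j\in J_x}a_j=0$. Splitting $J_x=J_x^+\cup J_x^-$ by the sign of $a_j$ and letting $S:=\sum_{J_x^+}a_j=-\sum_{J_x^-}a_j$, I select $j^\star\in J_x^+$ and $j^\bullet\in J_x^-$ maximizing $c_{j^\star}$ and minimizing $c_{j^\bullet}$ respectively, obtaining
\[ \sum_{j\in J_x}c_j a_j \;\le\; (c_{j^\star}-c_{j^\bullet})\,S \;\le\; |\widetilde\Phi^{j^\star}(x)-\widetilde\Phi^{j^\bullet}(x)|\,S \;\le\; 2S \;=\; \sum_{j\in J_x}|a_j|, \]
where the last inequality is condition $(\mathbf 2)$. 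Integrating on $p(J_v)$ yields $\int_Y\widetilde\Phi\cdot Dv\le |Dv|(Y)$; jumps lying on the cut $\Sigma$ are treated identically after relabeling the sheets according to the identification rule defining $Y$, which leaves all the quantities above unchanged.

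\emph{Conclusion and main obstacle.} Combining Steps 1 and 2 gives $|Du|(Y)=\int_Y\widetilde\Phi\cdot Dv\le|Dv|(Y)$ for every $v\in BV_{constr}^\#(Y)$, which is the claimed minimality. The actual work is concentrated in Step 2: in the binary case $J_x$ has exactly two elements with $a$-coefficients $+1,-1$, and the inequality trivially collapses to $(\widetilde\Phi^{j^\star}-\widetilde\Phi^{j^\bullet})\cdot\nu\le2$; with finitely many values, $|J_x|$ may be arbitrarily large and the $|a_j|$ no longer equal, so one has to exploit the sum constraint to reduce control of the whole sum to a single pairwise difference of calibration values. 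This ``max--min'' reduction, which converts the pointwise constraint $\sum_{j\in J_x}a_j=0$ into a bound involving only one pair $(\widetilde\Phi^{j^\star},\widetilde\Phi^{j^\bullet})$, is the only genuinely new ingredient beyond \cite{calicipi}.
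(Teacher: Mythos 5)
Your proposal is correct and follows essentially the same route as the paper: Step 1 is the content of Remark~\ref{divteo} combined with conditions (\textbf{1}) and (\textbf{3}), and your ``max--min'' reduction in Step 2, which uses $\sum_j a_j=0$ and the pairwise bound $|\widetilde\Phi^{j^\star}-\widetilde\Phi^{j^\bullet}|\le 2$ to control the whole sum, is exactly the paper's Lemma~\ref{miracle}. You have correctly identified that this pointwise lemma is the only genuinely new ingredient beyond the binary case.
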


For the proof of Theorem~\ref{impli} we need the following:
\begin{lemma}\label{miracle}
Let $\{\eta_i\}_{i=1,\ldots,n}$ and $\{t_i\}_{j=1,\ldots,n}$ such that $\sum_{i=1}^n
\eta_i = 0$ and $|t_i - t_j| \leq 2$ for every $i,j \in 1\ldots,m$. Then
\begin{equation}\label{ooo}
\left|\sum_{i=1}^n t_i \eta_i \right| \leq \sum_{i=1}^n |\eta_i|\,.
\end{equation}
\end{lemma}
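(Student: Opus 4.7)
The plan is to exploit the freedom granted by the constraint $\sum_{i=1}^{n}\eta_i=0$: this means that subtracting any common constant $c$ from each $t_i$ does not alter the sum $\sum_i t_i\eta_i$, since
\begin{equation*}
\sum_{i=1}^n (t_i-c)\eta_i=\sum_{i=1}^n t_i\eta_i-c\sum_{i=1}^n\eta_i=\sum_{i=1}^n t_i\eta_i.
\end{equation*}
So I am free to recenter the sequence $\{t_i\}$.

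Next I would use the hypothesis $|t_i-t_j|\leq 2$ for every $i,j$ to argue that the $t_i$'s all lie in an interval of length at most $2$. Concretely, setting
\begin{equation*}
c:=\frac{1}{2}\Bigl(\max_{1\leq i\leq n} t_i+\min_{1\leq i\leq n} t_i\Bigr),
\end{equation*}
the values $s_i:=t_i-c$ satisfy $|s_i|\leq 1$ for every $i$ (because both $\max_i s_i$ and $-\min_i s_i$ equal $\tfrac{1}{2}(\max t_i-\min t_i)\leq 1$).

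With this centering in hand, the bound follows immediately from the triangle inequality:
\begin{equation*}
\Bigl|\sum_{i=1}^n t_i\eta_i\Bigr|=\Bigl|\sum_{i=1}^n s_i\eta_i\Bigr|\leq \sum_{i=1}^n|s_i|\,|\eta_i|\leq \sum_{i=1}^n|\eta_i|,
\end{equation*}
which is exactly \eqref{ooo}. There is no real obstacle here: the statement is a simple observation, and the only small trick is to recognize that the constraint $\sum_i\eta_i=0$ lets us translate the $t_i$'s to fit inside $[-1,1]$, after which the estimate is a one-line triangle inequality.
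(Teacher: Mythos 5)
Your proof is correct, but it takes a different route from the paper. The paper splits the sum according to the sign of $\eta_i$, replaces $t_i$ by $\max_i t_i$ on the positive part and by $\min_i t_i$ on the negative part, and then uses $\sum_{\eta_i<0}\eta_i=-\sum_{\eta_i>0}\eta_i$ together with $\max_i t_i-\min_i t_i\leq 2$ to conclude; the zero-sum hypothesis enters only at the very end, to identify the two partial sums. You instead use the zero-sum hypothesis at the outset, observing that $\sum_i t_i\eta_i$ is invariant under translating all the $t_i$ by a common constant, and then recenter so that every $|t_i-c|\leq 1$, after which the plain triangle inequality finishes the job. Your version is arguably cleaner: it reduces the diameter condition $|t_i-t_j|\leq 2$ to the pointwise bound $|s_i|\leq 1$ once and for all, and it sidesteps the slightly delicate step in the paper's chain where an absolute value is bounded by another absolute value (which, as written, really requires checking the upper and lower estimates separately). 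Both arguments are elementary and of comparable length; the recentering trick is also the one that generalizes most transparently if one replaces scalars by vectors.
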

\begin{proof}
Notice that 
\begin{eqnarray*}
\left|\sum_{i=1}^n t_i \eta_i \right| &=& \left|\sum_{\eta_i > 0} t_i \eta_i +
\sum_{\eta_i < 0} t_i \eta_i\right| \leq \left|\sum_{\eta_i > 0} \max(t_i) \eta_i +
\sum_{\eta_i < 0} \min(t_i) \eta_i\right| \\
&=& \left|\sum_{\eta_i > 0} \max(t_i) \eta_i - \sum_{\eta_i > 0} \min(t_i)
\eta_i\right| \leq 2\left|\sum_{\eta_i > 0} \eta_i\right| = \sum_{i=1}^n |\eta_i|\,.
\end{eqnarray*}
\end{proof}

\begin{rem}\label{divteo} 
 We also note that given $u,w \in BV_{constr}(Y,[0,1])$ and
$\Phi : Y\to \R^2$ an approximately regular divergence free
vector field it holds
\begin{equation}\label{booh}
\int_{Y} \Phi \cdot Du = \int_{Y} \Phi \cdot Dw\,.
\end{equation}    
This result can be proved adapting~\cite[Proposition 4.3]{calicipi}.
\end{rem}

\textit{Proof of Theorem~\ref{impli}.}

Consider $u\in BV_{constr}(Y,\{0,1\})$, 
$\Phi :Y\to\mathbb{R}^{2}$ a calibration for $u$
and $w$ a competitor in $BV_{constr}^\#(Y)$. 
Combining Remark~\ref{divteo} with Conditions (\textbf{1}) and (\textbf{3})
of Definition~\ref{caliconvering} we have
\begin{equation}\label{b}
|Du|(Y) = \int_{Y} \Phi \cdot Du= \int_{Y} \Phi \cdot Dw\,.
\end{equation}
Moreover by the representation formula for $Dw$ in the space $Y$  we get
\begin{equation*}
 \int_{Y} \Phi \cdot Dw = \sum_{j=1}^m \int_{\R^2} \Phi^j \cdot Dw^j\,, 
\end{equation*}
where 
without loss of generality we have supposed that $p(J_w) \cap \Sigma=\emptyset$ (see Remark \ref{independent}).

Calling 
$\eta_j (x) = (w^j)^+(x) - (w^j)^+(x)$
(we refer to Remark~\ref{presc} for the definition of $w^j$), 
we notice that, as $w \in BV_{constr}^\#(Y)$, for almost every $x \in \R^2$
\begin{equation*}
\sum_{j=1}^m \eta_j (x) = 0 \,.
\end{equation*}
Then
\begin{align*}
\sum_{j=1}^m \int_{\R^2} \Phi^j \cdot Dw^j  
&= \sum_{j=1}^m \int_{J_{w^j}} \eta_j \Phi^j \cdot \nu \, d\Ha^{1}
\nonumber\\
&= \int_{p(J_w)}\sum_{j=1}^m  \eta_j (\Phi^j \cdot \nu) \chi_{J_w^j} \,
d\Ha^{1} \\
&\leq  \int_{p(J_w)}\Big|\sum_{j=1}^m \Phi^j \eta_j \chi_{J_w^j} \Big|\,
d\Ha^{1} \,.
\end{align*}
Applying Lemma~\ref{miracle} one obtains 
\begin{equation}\label{c}
\int_{Y} \Phi \cdot Dw \leq  \int_{p(J_w)} \sum_{j=1}^m |\eta_j|\chi_{J_w^j}\,
d\Ha^1 = |Dw|(Y)\,.
\end{equation}
Hence combining \eqref{b} with \eqref{c} we conclude that
\begin{equation*}
|Du|(Y) = \int_{Y} \Phi \cdot Du
= \int_{Y} \Phi \cdot Dw\leq |Dw|(Y)\,.
\end{equation*}
\qed

\begin{rem}
The previous theorem is sharp, in the sense that one cannot replace
$BV_{constr}^\#(Y)$  by $BV_{constr}(Y, [0,1])$.
Indeed consider $S=\{p_1,p_2,p_3\}$ with $p_i$
vertices of an equilateral triangle.
Although the minimizer $u\in BV_{constr}(Y, \{0,1\})$
is calibrated (for the result in our setting see~\cite[Example 3.8]{calicipi}),
there exists a function in  $BV_{constr}(Y, [0,1])\setminus BV_{constr}^\#(Y)$ 
whose total variation is strictly less than the total variation of $u$, 
as it shown in~\cite[Proposition 5.1]{chambollecremerspock}.
\end{rem}


We define now the convexification of 
$\vert Du\vert$ with $u\in BV_{constr}(Y,\{0,1\})$ naturally associated to the notion of calibration for covering spaces. 
It was introduced 
by Chambolle, Cremers and Pock in~\cite{chambollecremerspock} in the context of minimal partitions.

\begin{dfnz}[Local convex envelope]
Let $u\in BV_{constr}(Y, [0,1])$.
We consider the functional 
$G$ given by
\begin{equation*}
G(u):=\int_{Y}\Psi(Du)\,,
\end{equation*}
where  
\begin{equation*}
\Psi(q) = \sup_{p\in K} \sum_{j=1}^n p^j \cdot q^j
\end{equation*}
and 
\begin{equation*}
K = \{p \in Y : \|p^i - p^j\| \leq 2, \mbox{ for every } i,j = 1,\ldots, n\} \,.
\end{equation*}
In analogy with~\cite{chambollecremerspock}  we call $G$ 
local convex envelope.
\end{dfnz}

The local convex envelope is the tightest convexification with an integral form, indeed:

\begin{prop}{(\cite{chambollecremerspock})}\label{best}
The  local convex envelope $G$ is the larger 
convex integral functional   of the form 
$H(v)=\int_{Y} \Psi(x, Dv)$
with $v\in BV_{constr}(Y, [0,1])$ and $\Psi(x,\cdot)$ non negative,
even and convex such that 
 \begin{equation*}
 H(v)=\vert Dv\vert(Y) \quad \text{for} \;v\in BV_{constr}(Y,\{0,1\})\,.
\end{equation*}
\end{prop}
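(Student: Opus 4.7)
The strategy is to establish two facts: first, that $G$ itself is admissible, i.e.\ $G(v)=|Dv|(Y)$ for every $v\in BV_{constr}(Y,\{0,1\})$; and second, that any other admissible $H$ satisfies $H\leq G$ on $BV_{constr}(Y,[0,1])$. This is the standard scheme for local convex envelopes introduced by Chambolle, Cremers and Pock, adapted to the covering space setting.

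For the first fact I would analyze the integrand $\Psi$ at $\mathcal{H}^1$-a.e.\ point of the jump set $J_v$. A generic such point projects to some $x\in M$ where exactly two sheets $i,j$ swap the value $0\leftrightarrow 1$ across a common unit normal $\nu$ (the possible higher-order junctions form an $\mathcal{H}^1$-negligible set by the constraint on $Y$). Locally $Dv$ writes as a rank-one measure with sheet components $(Dv)^i=-\nu\,\mathrm{d}\mathcal{H}^1$ and $(Dv)^j=\nu\,\mathrm{d}\mathcal{H}^1$, all others vanishing, so
\begin{equation*}
\Psi(Dv)=\sup_{p\in K}(p^j-p^i)\cdot\nu=2,
\end{equation*}
the sup being attained along $p^j-p^i=2\nu$. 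Integrating along $\partial^\ast E$ with the proper accounting of the sheet gluing recovers $G(v)=|Dv|(Y)$.

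For the second fact I write the integrand of $H$ as $\Psi_H(x,\cdot)$ and reduce the comparison $H\leq G$ to the pointwise bound $\Psi_H(x,q)\leq\Psi(q)$ on integrands, which then integrates directly against $Du$. Since $\Psi$ is the support function of $K$, this pointwise bound is dual to the inclusion $\{p:\Psi_H^\ast(x,p)\leq 0\}\subseteq K$, where $\Psi_H^\ast(x,\cdot)$ denotes the Fenchel conjugate in the second argument. The crucial step is to localize the identity $H(v)=|Dv|(Y)$: by testing on a rich enough family of constrained characteristic functions whose jump sets pass through any prescribed point $x$ with any prescribed normal $\nu$ and any prescribed active pair of sheets $i\neq j$, one forces
\begin{equation*}
\Psi_H\bigl(x,q_{i,j,\nu}\bigr)=2,
\end{equation*}
where $q_{i,j,\nu}$ is the atomic direction of the first step. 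Given $p\notin K$ there exist $i,j$ with $|p^i-p^j|>2$; choosing $q$ aligned with $(p^j-p^i)$ yields $\langle p,q\rangle=|p^j-p^i|>2=\Psi_H(x,q)$, so $\Psi_H^\ast(x,p)>0$ and the desired inclusion follows by lower semicontinuity.

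The main obstacle I foresee is the localization step: producing, for an arbitrary quadruple $(x,\nu,i,j)$, a constrained characteristic function in $BV_{constr}(Y,\{0,1\})$ whose reduced boundary well-approximates a segment through $x$ with normal $\nu$ and whose active sheets there are exactly $i$ and $j$. The difficulty is that the constraint $\sum_{p(y)=x} v(y)=1$ couples the sheets globally, so a flip on sheet $j$ must be compensated elsewhere in the partition; one has to arrange the compensation far from $x$ or on a set of small $\mathcal{H}^1$ measure so that the localized limit identifies $\Psi_H(x,q_{i,j,\nu})$. A secondary technical point is the correct interpretation of $\int\Psi_H(x,Du)$ for general $u\in BV_{constr}(Y,[0,1])$ whose derivative has both absolutely continuous and singular parts; the comparison with $G$ must be carried out both for $\Psi_H(x,\cdot)$ and for its recession function, in the Goffman--Serrin / Demengel--Temam framework for convex functionals on measures.
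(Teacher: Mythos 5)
First, a point of reference: the paper does not prove Proposition~\ref{best} at all --- it is imported from \cite{chambollecremerspock}, where the analogous statement is established in the minimal-partition setting, so there is no in-paper argument to compare against. Your two-step scheme (show $G$ itself satisfies $G(v)=\vert Dv\vert(Y)$ on $BV_{constr}(Y,\{0,1\})$, then show every admissible $H$ is pointwise dominated by $G$) is the right and standard one, and your computation that $\Psi(Dv)=\sup_{p\in K}(p^j-p^i)\cdot\nu=2=\tfrac{d\vert Dv\vert}{d\Ha^1}$ at an $\Ha^1$-generic two-sheet jump is correct.

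There are, however, two genuine gaps. The first is the one you flag yourself: the localization producing $\Psi_H^\infty(x,q_{i,j,\nu})=2$ for a.e.\ $x$, every unit $\nu$ and every pair $i\neq j$ is where essentially all of the work lies, and it is not carried out. In the covering setting the natural test sets (sheet $1$ everywhere except a small ball around $x$ split by a diameter with normal $\nu$ into sheets $i$ and $j$) always carry extra jump set on the boundary circle involving other sheet pairs, and that contribution scales like $r$ exactly as the diameter does, so a single blow-up does not isolate $\Psi_H(x,q_{i,j,\nu})$; one must compare two competitors differing only along the diameter, and then handle ``all $\nu$'' via a countable dense family together with continuity of the finite convex function $\Psi_H^\infty(x,\cdot)$. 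The second gap is in your duality step: proving $\Psi_H^\ast(x,p)>0$ for $p\notin K$ does \emph{not} yield $\Psi_H(x,\cdot)\leq\sigma_K=\Psi$; for that you would need $\mathrm{dom}\,\Psi_H^\ast(x,\cdot)\subseteq K$, i.e.\ $\Psi_H^\ast=+\infty$ off $K$, which strict positivity does not give (a small positive value of $\Psi_H^\ast$ at a far-away $p$ still contributes). The correct route is the one you only hint at in your last sentence: first note $\Psi_H(x,0)=0$ (test on a constant competitor), deduce $\Psi_H(x,\cdot)\leq\Psi_H^\infty(x,\cdot)$ by monotonicity of difference quotients, write $\Psi_H^\infty(x,\cdot)=\sigma_{C_x}$ for a closed convex $C_x$, and observe that $\Psi_H^\infty(x,q_{i,j,\nu})\leq2$ for all $(i,j,\nu)$ forces $C_x\subseteq\bigcap_{i,j,\nu}\{p:(p^j-p^i)\cdot\nu\leq2\}=K$, whence $\Psi_H(x,\cdot)\leq\sigma_K=\Psi$. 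With those two repairs the outline closes.
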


As a consequence of Theorem~\ref{impli} we are able to prove:

\begin{prop}\label{equal}
It holds
 \begin{equation*}
G(v)=\vert Dv\vert(Y) \quad \text{for} \;v\in BV_{constr}^\#(Y)\,.
\end{equation*}
\end{prop}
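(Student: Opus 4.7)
The statement reduces to two inequalities. The inequality $G(v)\geq |Dv|(Y)$ on all of $BV_{constr}(Y,[0,1])$ is essentially free: the total variation itself is a convex integral functional (take $\Psi(x,q)=\sum_j |q^j|$ acting on the parametrizations of $Dv$ on the sheets, which is non-negative, even and convex) that equals $|Dv|(Y)$ on $BV_{constr}(Y,\{0,1\})$. By the maximality statement in Proposition~\ref{best}, this gives $G\geq |D\cdot|(Y)$ as functionals on $BV_{constr}(Y,[0,1])$.

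The non-trivial inequality $G(v)\leq |Dv|(Y)$ is where the restriction to $BV_{constr}^\#(Y)$ enters: since $v$ takes only finitely many values, $Dv$ has no absolutely continuous part and is a purely singular measure concentrated on the projected jump set $p(J_v)$. Choosing the cut $\Sigma$ so that $\Sigma\cap p(J_v)=\emptyset$ (Remark~\ref{independent}), on each sheet the jumps of $v^j$ share the common normal $\nu$ and have amplitudes $\eta_j(x)=(v^j)^+(x)-(v^j)^-(x)$, while the constraint $\sum_{p(y)=x} v(y)=1$ forces $\sum_{j=1}^n \eta_j(x)=0$ for $\mathcal{H}^1$-a.e. $x\in p(J_v)$. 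The computation then reduces to bounding $\Psi$ pointwise on the density $(\eta_j\nu\chi_{J_{v^j}})_j$ of $Dv$: for any $p\in K$, setting $t_j:=p^j\cdot\nu$ we have $|t_i-t_j|\leq |p^i-p^j|\leq 2$, so Lemma~\ref{miracle} gives
\begin{equation*}
\sum_{j=1}^n p^j\cdot(\eta_j\nu)=\sum_{j=1}^n t_j\eta_j\leq \sum_{j=1}^n |\eta_j|.
\end{equation*}
Taking the supremum over $p\in K$ yields $\Psi((\eta_j\nu)_j)\leq \sum_j |\eta_j|$, which is exactly the density of $|Dv|$ at the jump. Integrating over $p(J_v)$ concludes.

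The main obstacle is bookkeeping rather than analysis: one must correctly express $Dv$ and the integrand $\Psi(Dv)$ through the parametrizations on the sheets, exactly as done in step \textbf{(3)} of the proof of Theorem~\ref{impli}, and verify the vanishing sum of the jump amplitudes from the constraint. Once the setup is in place, the sharp pointwise bound on $\Psi$ is delivered by the very same application of Lemma~\ref{miracle} that powers Theorem~\ref{impli}, making the proposition a genuine by-product of that theorem.
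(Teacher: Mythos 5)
Your proposal is correct and follows essentially the same route as the paper: the lower bound $G(v)\geq |Dv|(Y)$ comes from the maximality in Proposition~\ref{best} applied to the total variation itself, and the upper bound is exactly the jump-set computation from the proof of Theorem~\ref{impli} (constraint forces $\sum_j\eta_j=0$, Lemma~\ref{miracle} with $t_j=p^j\cdot\nu$, then supremum over $p\in K$). The only cosmetic difference is that you carry out the pointwise bound on $\Psi(Dv)$ explicitly, whereas the paper integrates first and then takes the supremum; the content is the same.
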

\begin{proof}
The inequality $G(v) \geq \vert Dv\vert(Y)$ is consequence of 
Proposition~\ref{best} choosing $\Psi(x,p) = |p|$. For the other inequality it is just enough 
to notice that given $v \in BV^\#_{constr}(Y)$, from the proof of Theorem~\ref{impli} we obtain that
\begin{equation*}
\int_{Y} p \cdot Dv \leq \vert Dv\vert(Y)
\end{equation*}
for every $p\in K$. Therefore taking the supremum 
on both sides and using that $|Dv|(Y) < +\infty$ we conclude that
\begin{equation*}
\int_{Y}  \sup_{p\in K}  p \cdot Dv \leq \vert Dv\vert(Y) \, .
\end{equation*}
\end{proof}
\begin{rem}
Proposition \ref{equal} shows that even if the local convex envelope is the best integral convexification of the problem, it "outperforms" the total variation only when evaluated on functions $u\in BV_{constr}(Y,[0,1]) \setminus BV_{constr}^\#(Y)$.
\end{rem}

\section{An example of nonexistence of calibrations}\label{nonexistence}

Finding a calibration for a candidate minimizer is not an easy task.
We wanted to understand at least
whether there exists a calibration when $S$ is composed 
of points lying at the vertices of a regular polygon.
We have a positive answer only in the case of a triangle and of a 
square~\cite{calicipi, annalisaandrea}.
As a byproduct of Theorem~\ref{impli} we are now able to 
negatively answer in the case of a regular pentagon.

\begin{ex}[Five vertices of a regular pentagon]\label{fivepoints}
Given $S=\{p_1,\ldots,p_5\}$ with $p_i$  the five vertices of a regular pentagon,
the minimizer of the Steiner problem is well known.
Following the canonical construction presented in~\cite[Proposition 2.28]{calicipi}
it is not difficult to construct the ``associated" 
function  $u\in BV_{constr}(Y,\{0,1\})$
here represented in Figure~\ref{pentaoncovering}.
By explicit computations one gets $\frac{1}{2}\vert Du\vert (Y)=4.7653$.
Consider now the function $w\in BV_{constr}(Y, \{0,\frac12,1\})$
exhibited in Figure~\ref{pentaoncovering}.
It holds
$\frac{1}{2}\vert Dw\vert (Y) \approx 4.5677$.
Theorem~\ref{impli} tells us that if a calibration for the minimizer 
$u$ exists, then $u$ has to be a minimizer also in the larger space
$BV_{constr}^\#(Y)$, but $\vert Dw\vert (Y) <\vert Du\vert(Y)$, 
hence  a calibration
for $u$ does not exists.

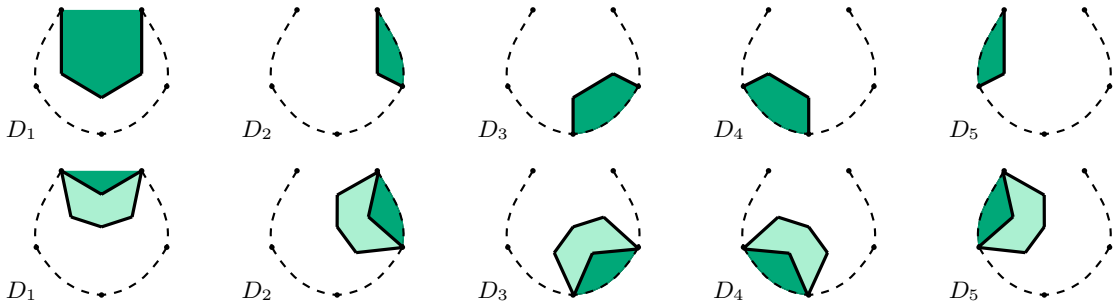
\begin{figure}[H]
\begin{tikzpicture}[scale=0.6]
\filldraw[fill=green(munsell)]
(0.88,1.24)--(0.88,-0.17)--
(0.88,-0.17)--(0,-0.7)--
(0,-0.7)--(-0.88,-0.17)--
(-0.88,1.24);
\draw[color=black, dashed, thick]
(-0.88,1.24)to[out= -120,in=100, looseness=1] (-1.43,-0.45) 
(0.88,1.24)to[out= -60,in=80, looseness=1] (1.43,-0.45)  
(-1.43,-0.45) to[out= -60,in=175, looseness=1] (0,-1.505)
(1.43,-0.45) to[out= -120,in=5, looseness=1] (0,-1.505);
\path[font=\footnotesize]
(-1.75,-1.85) node[above]{$D_1$};
\fill[black](-0.88,1.24) circle (1.7pt);    
 \fill[black](0.88,1.24) circle (1.7pt);
\fill[black](1.43,-0.45) circle (1.7pt);
\fill[black](-1.43,-0.45) circle (1.7pt);
\fill[black](0,-1.505) circle (1.7pt);
\draw[very thick]
(0.88,1.24)--(0.88,-0.17)
(-0.88,1.24)--(-0.88,-0.17)
(-0.88,-0.17)--(0,-0.7)
(0.88,-0.17)--(0,-0.7);
\end{tikzpicture}\qquad
\begin{tikzpicture}[scale=0.6]
\filldraw[fill=green(munsell)]
(0.88,1.24)to[out= -60,in=80, looseness=1] (1.43,-0.45)    --
 (1.43,-0.45)--(0.88,-0.17)--
 (0.88,-0.17)--(0.88,1.24);
 \draw[green(munsell)]
(0.88,1.24)to[out= -60,in=80, looseness=1] (1.43,-0.45)  ;
\draw[color=black, dashed, thick]
(-0.88,1.24)to[out= -120,in=100, looseness=1] (-1.43,-0.45) 
(0.88,1.24)to[out= -60,in=80, looseness=1] (1.43,-0.45)  
(-1.43,-0.45) to[out= -60,in=175, looseness=1] (0,-1.505)
(1.43,-0.45) to[out= -120,in=5, looseness=1] (0,-1.505);
\path[font=\footnotesize]
(-1.75,-1.85) node[above]{$D_2$};
\fill[black](-0.88,1.24) circle (1.7pt);    
 \fill[black](0.88,1.24) circle (1.7pt);
\fill[black](1.43,-0.45) circle (1.7pt);
\fill[black](-1.43,-0.45) circle (1.7pt);
\fill[black](0,-1.505) circle (1.7pt);
\draw[very thick]
(0.88,1.24)--(0.88,-0.17)
(0.88,-0.17)--(1.43,-0.45);
\end{tikzpicture}\qquad
\begin{tikzpicture}[scale=0.6]
\filldraw[fill=green(munsell)]
(1.43,-0.45) to[out= -120,in=5, looseness=1] (0,-1.505) --
(0,-1.505)--(0,-0.7)--
(0,-0.7)--(0.88,-0.17)--
(0.88,-0.17)--(1.43,-0.45);
 \draw[green(munsell)]
(1.43,-0.45) to[out= -120,in=5, looseness=1] (0,-1.505) ;
\draw[color=black, dashed, thick]
(-0.88,1.24)to[out= -120,in=100, looseness=1] (-1.43,-0.45) 
(0.88,1.24)to[out= -60,in=80, looseness=1] (1.43,-0.45)  
(-1.43,-0.45) to[out= -60,in=175, looseness=1] (0,-1.505)
(1.43,-0.45) to[out= -120,in=5, looseness=1] (0,-1.505);
\path[font=\footnotesize]
(-1.75,-1.85) node[above]{$D_3$};
\fill[black](-0.88,1.24) circle (1.7pt);    
 \fill[black](0.88,1.24) circle (1.7pt);
\fill[black](1.43,-0.45) circle (1.7pt);
\fill[black](-1.43,-0.45) circle (1.7pt);
\fill[black](0,-1.505) circle (1.7pt);
\draw[very thick]
(1.43,-0.45)--(0.88,-0.17)
(0.88,-0.17)--(0,-0.7)
(0,-0.7)--(0,-1.505);
\end{tikzpicture}\qquad
\begin{tikzpicture}[scale=0.6]
\filldraw[fill=green(munsell)]
(-1.43,-0.45) to[out= -60,in=175, looseness=1] (0,-1.505) --
(0,-1.505)--(0,-0.7)--
(0,-0.7)--(-0.88,-0.17)--
(-0.88,-0.17)--(-1.43,-0.45);
 \draw[green(munsell)]
(-1.43,-0.45) to[out= -60,in=175, looseness=1] (0,-1.505) ;
\draw[color=black, dashed, thick]
(-0.88,1.24)to[out= -120,in=100, looseness=1] (-1.43,-0.45) 
(0.88,1.24)to[out= -60,in=80, looseness=1] (1.43,-0.45)  
(-1.43,-0.45) to[out= -60,in=175, looseness=1] (0,-1.505)
(1.43,-0.45) to[out= -120,in=5, looseness=1] (0,-1.505);
\path[font=\footnotesize]
(-1.75,-1.85) node[above]{$D_4$};
\fill[black](-0.88,1.24) circle (1.7pt);    
 \fill[black](0.88,1.24) circle (1.7pt);
\fill[black](1.43,-0.45) circle (1.7pt);
\fill[black](-1.43,-0.45) circle (1.7pt);
\fill[black](0,-1.505) circle (1.7pt);
\draw[very thick]
(-1.43,-0.45)--(-0.88,-0.17)
(-0.88,-0.17)--(0,-0.7)
(0,-0.7)--(0,-1.505);
\end{tikzpicture}\qquad
\begin{tikzpicture}[scale=0.6]
\filldraw[fill=green(munsell)]
(-0.88,1.24)to[out= -120,in=100, looseness=1] (-1.43,-0.45) --
 (-1.43,-0.45)--(-0.88,-0.17)--
 (-0.88,-0.17)--(-0.88,1.24);
 \draw[green(munsell)]
 (-0.88,1.24)to[out= -120,in=100, looseness=1] (-1.43,-0.45) ;
\draw[color=black, dashed, thick]
(-0.88,1.24)to[out= -120,in=100, looseness=1] (-1.43,-0.45) 
(0.88,1.24)to[out= -60,in=80, looseness=1] (1.43,-0.45)  
(-1.43,-0.45) to[out= -60,in=175, looseness=1] (0,-1.505)
(1.43,-0.45) to[out= -120,in=5, looseness=1] (0,-1.505);
\path[font=\footnotesize]
(-1.75,-1.85) node[above]{$D_5$};
\fill[black](-0.88,1.24) circle (1.7pt);    
 \fill[black](0.88,1.24) circle (1.7pt);
\fill[black](1.43,-0.45) circle (1.7pt);
\fill[black](-1.43,-0.45) circle (1.7pt);
\fill[black](0,-1.505) circle (1.7pt);
\draw[very thick]
(-0.88,1.24)--(-0.88,-0.17)
(-0.88,-0.17)--(-1.43,-0.45);
\end{tikzpicture}

\medskip

\begin{tikzpicture}[scale=0.6]
\filldraw[fill=green(munsell)]
(0.88,1.24)--(0,0.72)--
(-0.88,1.24);
\filldraw[fill=magicmint]
(0,0.72)--(-0.88,1.24)--
(-0.88,1.24)--(-0.67,0.22)--
(-0.67,0.22)--(0,0)--
(0,0)--(0.67,0.22)--
(0.67,0.22)--(0.88,1.24)
(0.88,1.24)--(0,0.72);
\draw[very thick, black]
(-0.88,1.24)--(-0.67,0.22)
(-0.67,0.22)--(0,0)
(0,0)--(0.67,0.22)
(0.67,0.22)--(0.88,1.24)
(0.88,1.24)--(0,0.72)
(-0.88,1.24)--(0,0.72);
\draw[color=black, dashed, thick]
(-0.88,1.24)to[out= -120,in=100, looseness=1] (-1.43,-0.45) 
(0.88,1.24)to[out= -60,in=80, looseness=1] (1.43,-0.45)  
(-1.43,-0.45) to[out= -60,in=175, looseness=1] (0,-1.505)
(1.43,-0.45) to[out= -120,in=5, looseness=1] (0,-1.505);
\path[font=\footnotesize]
(-1.75,-1.85) node[above]{$D_1$};
\fill[black](-0.88,1.24) circle (1.7pt);    
 \fill[black](0.88,1.24) circle (1.7pt);
\fill[black](1.43,-0.45) circle (1.7pt);
\fill[black](-1.43,-0.45) circle (1.7pt);
\fill[black](0,-1.505) circle (1.7pt);
\end{tikzpicture}\qquad
\begin{tikzpicture}[scale=0.6]
\filldraw[fill=green(munsell)]
(0.88,1.24)to[out= -60,in=80, looseness=1] (1.43,-0.45)    --
 (1.43,-0.45)--(0,-0)--
 (0,0)--(0.88,1.24);
\filldraw[fill=magicmint, rotate=-72]
(0,0.72)--(-0.88,1.24)--
(-0.88,1.24)--(-0.67,0.22)--
(-0.67,0.22)--(0,0)--
(0,0)--(0.67,0.22)--
(0.67,0.22)--(0.88,1.24)
(0.88,1.24)--(0,0.72);
 \draw[green(munsell)]
(0.88,1.24)to[out= -60,in=80, looseness=1] (1.43,-0.45)  ;
\draw[color=black, dashed, thick]
(-0.88,1.24)to[out= -120,in=100, looseness=1] (-1.43,-0.45) 
(0.88,1.24)to[out= -60,in=80, looseness=1] (1.43,-0.45)  
(-1.43,-0.45) to[out= -60,in=175, looseness=1] (0,-1.505)
(1.43,-0.45) to[out= -120,in=5, looseness=1] (0,-1.505);
\path[font=\footnotesize]
(-1.75,-1.85) node[above]{$D_2$};
\fill[black](-0.88,1.24) circle (1.7pt);    
 \fill[black](0.88,1.24) circle (1.7pt);
\fill[black](1.43,-0.45) circle (1.7pt);
\fill[black](-1.43,-0.45) circle (1.7pt);
\fill[black](0,-1.505) circle (1.7pt);
\draw[very thick, black, rotate=-72]
(-0.88,1.24)--(-0.67,0.22)
(-0.67,0.22)--(0,0)
(0,0)--(0.67,0.22)
(0.67,0.22)--(0.88,1.24)
(0.88,1.24)--(0,0.72)
(-0.88,1.24)--(0,0.72);
\end{tikzpicture}\qquad
\begin{tikzpicture}[scale=0.6]
\filldraw[fill=green(munsell)]
(1.43,-0.45) to[out= -120,in=5, looseness=1] (0,-1.505) --
(0,-1.505)--(0,0)--
(0,0)--(1.43,-0.45);
\filldraw[fill=magicmint, rotate=-144]
(0,0.72)--(-0.88,1.24)--
(-0.88,1.24)--(-0.67,0.22)--
(-0.67,0.22)--(0,0)--
(0,0)--(0.67,0.22)--
(0.67,0.22)--(0.88,1.24)
(0.88,1.24)--(0,0.72);
 \draw[green(munsell)]
(1.43,-0.45) to[out= -120,in=5, looseness=1] (0,-1.505) ;
\draw[color=black, dashed, thick]
(-0.88,1.24)to[out= -120,in=100, looseness=1] (-1.43,-0.45) 
(0.88,1.24)to[out= -60,in=80, looseness=1] (1.43,-0.45)  
(-1.43,-0.45) to[out= -60,in=175, looseness=1] (0,-1.505)
(1.43,-0.45) to[out= -120,in=5, looseness=1] (0,-1.505);
\path[font=\footnotesize]
(-1.75,-1.85) node[above]{$D_3$};
\fill[black](-0.88,1.24) circle (1.7pt);    
 \fill[black](0.88,1.24) circle (1.7pt);
\fill[black](1.43,-0.45) circle (1.7pt);
\fill[black](-1.43,-0.45) circle (1.7pt);
\fill[black](0,-1.505) circle (1.7pt);
\draw[very thick, black, rotate=-144]
(-0.88,1.24)--(-0.67,0.22)
(-0.67,0.22)--(0,0)
(0,0)--(0.67,0.22)
(0.67,0.22)--(0.88,1.24)
(0.88,1.24)--(0,0.72)
(-0.88,1.24)--(0,0.72);
\end{tikzpicture}\qquad
\begin{tikzpicture}[scale=0.6]
\filldraw[fill=green(munsell)]
(-1.43,-0.45) to[out= -60,in=175, looseness=1] (0,-1.505) --
(0,-1.505)--(0,0)--
(0,0)--(-1.43,-0.45);
\filldraw[fill=magicmint, rotate=144]
(0,0.72)--(-0.88,1.24)--
(-0.88,1.24)--(-0.67,0.22)--
(-0.67,0.22)--(0,0)--
(0,0)--(0.67,0.22)--
(0.67,0.22)--(0.88,1.24)
(0.88,1.24)--(0,0.72);
 \draw[green(munsell)]
(-1.43,-0.45) to[out= -60,in=175, looseness=1] (0,-1.505) ;
\draw[color=black, dashed, thick]
(-0.88,1.24)to[out= -120,in=100, looseness=1] (-1.43,-0.45) 
(0.88,1.24)to[out= -60,in=80, looseness=1] (1.43,-0.45)  
(-1.43,-0.45) to[out= -60,in=175, looseness=1] (0,-1.505)
(1.43,-0.45) to[out= -120,in=5, looseness=1] (0,-1.505);
\path[font=\footnotesize]
(-1.75,-1.85) node[above]{$D_4$};
\fill[black](-0.88,1.24) circle (1.7pt);    
 \fill[black](0.88,1.24) circle (1.7pt);
\fill[black](1.43,-0.45) circle (1.7pt);
\fill[black](-1.43,-0.45) circle (1.7pt);
\fill[black](0,-1.505) circle (1.7pt);
\draw[very thick, black, rotate=144]
(-0.88,1.24)--(-0.67,0.22)
(-0.67,0.22)--(0,0)
(0,0)--(0.67,0.22)
(0.67,0.22)--(0.88,1.24)
(0.88,1.24)--(0,0.72)
(-0.88,1.24)--(0,0.72);
\end{tikzpicture}\qquad
\begin{tikzpicture}[scale=0.6]
\filldraw[fill=green(munsell)]
(-0.88,1.24)to[out= -120,in=100, looseness=1] (-1.43,-0.45) --
 (-1.43,-0.45)--(0,0)--
 (0,0)--(-0.88,1.24);
 \filldraw[fill=magicmint, rotate=72]
(0,0.72)--(-0.88,1.24)--
(-0.88,1.24)--(-0.67,0.22)--
(-0.67,0.22)--(0,0)--
(0,0)--(0.67,0.22)--
(0.67,0.22)--(0.88,1.24)
(0.88,1.24)--(0,0.72);
 \draw[green(munsell)]
 (-0.88,1.24)to[out= -120,in=100, looseness=1] (-1.43,-0.45) ;
\draw[color=black, dashed, thick]
(-0.88,1.24)to[out= -120,in=100, looseness=1] (-1.43,-0.45) 
(0.88,1.24)to[out= -60,in=80, looseness=1] (1.43,-0.45)  
(-1.43,-0.45) to[out= -60,in=175, looseness=1] (0,-1.505)
(1.43,-0.45) to[out= -120,in=5, looseness=1] (0,-1.505);
\path[font=\footnotesize]
(-1.75,-1.85) node[above]{$D_5$};
\fill[black](-0.88,1.24) circle (1.7pt);    
\fill[black](0.88,1.24) circle (1.7pt);
\fill[black](1.43,-0.45) circle (1.7pt);
\fill[black](-1.43,-0.45) circle (1.7pt);
\fill[black](0,-1.505) circle (1.7pt);
\draw[very thick, black, rotate=72]
(-0.88,1.24)--(-0.67,0.22)
(-0.67,0.22)--(0,0)
(0,0)--(0.67,0.22)
(0.67,0.22)--(0.88,1.24)
(0.88,1.24)--(0,0.72)
(-0.88,1.24)--(0,0.72);
\end{tikzpicture}
\caption{Up: The functions $u\in BV_{constr}(Y,\{0,1\})$ minimizer of Problem~\eqref{minpro2}
Down: A function $w$ in $BV_{constr}(Y,\{0,1/2,1\})$ with 
$\vert Dw\vert(Y)<\vert Du\vert(Y)$.
White corresponds to the value $0$, light green to $1/2$ and 
dark green to $1$.}\label{pentaoncovering}
\end{figure}

Non--existence of calibrations for minimal currents when 
$S=\{p_1,\ldots,p_5\}$ with $p_i$  the five vertices of a regular pentagon
was already highlighted 
in~\cite[Example 4.2]{bonafini}.
As we have shown that Definition~\ref{calicurrents}
is stronger than Definition~\ref{caliconvering} it
was necessary to  ``translate" the example in our setting
to conclude that a calibration does not exists for $u$ minimizer of Problem~\eqref{minpro2}.
\end{ex}

\begin{rem}
We refer also to~\cite[Example 4.6]{annalisaandrea}
where an example of nonexistence of calibrations is provided.
However in that case the ambient space is not $\mathbb{R}^2$
endowed with the standard Euclidean metric.
\end{rem}

\subsection{Remarks on calibrations in families}

Example~\ref{fivepoints} underlines an big issue
of the theory of calibrations.
\emph{Calibrations in families} (see~\cite[Section 4]{calicipi}) can
avoid the problem.
Indeed in~\cite[Example 4.9]{calicipi} we are able to find the minimal
Steiner network
for the five vertices of a regular pentagon via a calibration argument.
We explain briefly here the strategy we used and we validate it
with some remarks.

\begin{itemize}
\item First we divide the sets of $\mathscr{P}_{constr}(Y)$  in families.
The competitors that belong to the same class
share a property related to the projection of their essential boundary
onto the base set $M$.
In particular we define a family as
\begin{equation*}\label{famiglia}
\mathcal{F}(\mathcal{J}) := \{E \in \mathscr{P}_{constr}(Y_\Sigma):
 \Ha^1(E^{i,j}) \neq 0 \mbox{ for every } (i,j) \in \mathcal{J}\}.
\end{equation*}
where $\mathcal{J} \subset \{1,\ldots,m\}\times \{1,\ldots,m\}$  and
$E^{i,j}:= \partial^\ast E^i\cap \partial^\ast E^j$.
The union of the families has to cover $\mathscr{P}_{constr}(Y)$.
\item We consider a suitable notion of calibrations for $E$ in
$\mathcal{F}(\mathcal{J})$:
Condition \textbf{(2)} can be weaken as

$|\Phi^i(x) - \Phi^j(x)| \leq 2$ for every $i,j = 1,\ldots m$
such that $(i,j) \in \mathcal{J}$ and for every $x\in D$.
\item We calibrate the candidate minimizer in each family.
\item We compare the perimeter of each calibrated minimizer
to find the explicit global
minimizers of Problem~\eqref{minpro2}.
\end{itemize}

\textit{How to divide the competitors in families}

We consider as competitors only the sets in
$\mathscr{P}_{constr}(Y)$ whose projection onto $M$
is a network without loops.
Since it is known that the minimizers are tree--like, 
the previous choice is not restrictive.

\smallskip

Suppose that $S$ consists of $n$ points
located on the boundary of a convex set $\Omega$.
Then Problem~\eqref{minpro2} is equivalent to a minimal partition problem and
$E\in\mathscr{P}_{constr}(Y)$ induces a partition
$\{A_1,\ldots,A_n\}$ of $\Omega$. 
We classify the sets in $\mathscr{P}_{constr}(Y)$ simply prescribing
which phases  ``touch" each other (see~\cite[Lemma 4.8]{calicipi}).
The division in families depends on the topology of the complementary of
the network.

\smallskip

Let us now pass to the general case of any configuration of $n$ points of
$S$.
The minimal Steiner networks are composed of
at most $m=2n-3$ segments.
Each segment of a minimizer $\mathcal{S}$ coincides with
$\overline{p(E^{i,j})}$
for $i\neq j\in \{1,\ldots,n\}$.
Different segments of $\mathcal{S}$ are associated with different $E^{i,j}$.
We take $\mathcal{I}$ composed of  $2n-3$ different couples
of indices  $(i,j)\in \{1,\ldots,n\}\times \{1,\ldots,n\}$.
The cover of $\mathscr{P}_{constr}(Y)$ is given by considering
all possible $\mathcal{I}$  satisfying the above property.


\medskip

\textit{Existence of calibrations in families}

The just proposed division in families is the finest possible one
and it classifies the competitors relying on their topological type.
Note that the length is a convex function of the location of the junctions.
As a consequence each stationary network
is the unique minimizer in its topological type
(see 
for instance~\cite[Corollary  4.3]{morgancluster} and~\cite{choe} where more general situations
are treated)
and therefore a calibration in such a
family always exists.

\medskip

%
%

\textit{Export the idea of calibrations in families to currents}

Once identified the families for sets in $\mathscr{P}_{constr}(Y)$
it is possible to produce families
for Problems~\eqref{minprocurrents} and~\eqref{sigmaminprocurrents}.
Take a competitor in each $\mathcal{F}(\mathcal{J})$.
When the points of $S$ lie on the boundary of a convex set
it is sufficient to apply Lemma~\ref{construction}
(reminding that $E^i=A_{n+1-i}$) to construct a current $T$.
Then one can identify the coefficients of $T$.
Hence in this case
the classification in families will rely on which subsums of $g_i$ are
present in the competitors.

To deal with the case of general configurations of points of $S$,
we have to generalize Lemma~\ref{construction}.
In the construction
we set $T_i=[\partial^\ast E^i,\tau_i,e_i]$ where $\tau_i$ are the tangent vectors to $\partial^\ast E^i$ and the multiplicities
are chosen
in such a way that $e_i-e_{i-1}=\tilde{g}_i$ with $\tilde{g}_i$ linearly
independent
vectors of $\mathbb{R}^{n-1}$.
Again we set $T=\sum T_i$.  Now $\partial T$ is the sum of
$\tilde{g}_j\delta_{p_i}$
where $j$ can also be different from $i$.
We obtain a current with the desired boundary simply substituting
$\tilde{g}_j$ with $g_i$ in order to satisfy
$\tilde{g}_j\delta_{p_i}=g_i\delta_{p_i}$.

\section*{Appendix: Regularity of the calibration}

\begin{prop}[Constancy theorem for currents with coefficients in $\R^n$]\label{constancy}
Let $T$ be a normal $2$-current in $\R^2$ with coefficients in $\R^n$. Then there exists $u\in BV(\R^2,\R^n)$ such that for every $\omega \in C_c^\infty(\R^2,\R^n)$
\begin{equation}
T(\omega) =  \int_{\R^2} \langle \omega, u\rangle \, d\mathscr{L}^2\,.
\end{equation}
\end{prop}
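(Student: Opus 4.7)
The plan is to reduce to the classical scalar case of the constancy theorem for top--dimensional normal currents. The key structural observation is that $\Lambda_2(\R^2)$ is one--dimensional, spanned by $e_1\wedge e_2$. Hence every $2$--covector with values in $\R^n$ is determined by its value on $e_1\wedge e_2$, giving a canonical isomorphism $\Lambda^2_n(\R^2)\cong \R^n$. Under this identification, a test $2$--form $\omega\in C_c^\infty(\R^2,\Lambda^2_n(\R^2))$ becomes a smooth compactly supported map $\omega:\R^2\to\R^n$, and its pointwise comass reduces to $|\omega(x)|_{com}=\|\omega(x)\|_\ast$.

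First I would decompose $T$ componentwise. Denoting by $(e_i)_{i=1,\ldots,n}$ the canonical basis of $\R^n$, define scalar $2$--currents $T_i:C_c^\infty(\R^2)\to\R$ by $T_i(\varphi):=T(\varphi\, e_i)$. By linearity, $T(\omega)=\sum_i T_i(\omega_i)$ whenever $\omega=\sum_i\omega_i e_i$. Since all norms on $\R^n$ are equivalent, $\mathbb{M}(T)<\infty$ gives $\mathbb{M}(T_i)<\infty$. Moreover, testing $\partial T$ on $1$--forms of the form $\eta\, e_i$ with $\eta$ a scalar $1$--form, the definition $\partial T(\omega)=-T(d\omega)$ combined with $d(\eta\, e_i)=d\eta\cdot e_i$ yields $\partial T_i=(\partial T)_i$ as scalar $1$--currents, so $\mathbb{M}(\partial T_i)<\infty$ as well. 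Hence each $T_i$ is a classical normal $2$--current in $\R^2$.

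The central step is to invoke the classical representation theorem for top--dimensional normal currents: any normal $n$--current in $\R^n$ is of the form $[\R^n]\res u$ for some $u\in BV(\R^n)$. Concretely, finite mass of $T_i$ produces a signed Radon measure $\mu_i$ with $T_i(\varphi)=\int\varphi\, d\mu_i$; a direct computation shows $\partial T_i$ encodes, up to a rotation, the distributional gradient of $\mu_i$, so finite boundary mass forces $\nabla\mu_i$ to be a finite vector--valued Radon measure. A standard argument (see e.g.\ Federer 4.1.7 and the structure theory of normal currents) then promotes this to $\mu_i\ll\mathscr{L}^2$ with density $u_i\in BV(\R^2)$. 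This is the main obstacle: the non--trivial point is upgrading the Radon--measure regularity of $\mu_i$ to absolute continuity, which is exactly where the finiteness of $\mathbb{M}(\partial T_i)$ is decisive.

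Finally, assembling $u:=(u_1,\ldots,u_n)\in BV(\R^2,\R^n)$, linearity gives
\begin{equation*}
T(\omega)=\sum_{i=1}^n T_i(\omega_i)=\sum_{i=1}^n\int_{\R^2}\omega_i\,u_i\,d\mathscr{L}^2=\int_{\R^2}\langle\omega,u\rangle\,d\mathscr{L}^2,
\end{equation*}
which is the desired representation.
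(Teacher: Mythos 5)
Your proof is correct and follows essentially the same strategy as the paper's: identify $\Lambda^2_n(\R^2)$ with $\R^n$, split $T$ into $n$ scalar normal $2$--currents, and apply the classical constancy theorem to each component. The only cosmetic difference is that you define $T_i(\varphi):=T(\varphi\,e_i)$ directly and check normality by hand, whereas the paper first invokes the Riesz representation $T(\omega)=\int\langle\omega,\sigma\rangle\,d\mu_T$ and then sets $T_i(f)=\int f\sigma_i\,d\mu_T$; both routes reduce to the same scalar result.
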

\begin{proof}
Notice firstly that the space of $2$--forms with values in $\R^n$ can be identified by Hodge duality to the space $C_c^\infty(\R^2,\R^n)$.
As $T$ is a normal current, by Riesz theorem there exists $\sigma: \R^2 \rightarrow \R^n$ and $\mu_T$ a finite measure in $\R^2$ such that
\begin{equation}\label{const1}
T(\omega)  = \int_{\R^2} \langle \omega , \sigma\rangle \, d\mu_T = \sum_{i=1}^n\int_{\R^2} \omega_i \sigma_i \, d\mu_T\,.
\end{equation}
Defining $T_i : C_c^\infty(\R^2,\R) \rightarrow \R$ as
\begin{equation*}
T_i(f) = \int_{\R^2} f\sigma_i \, d\mu_T
\end{equation*}
we know that $T_i$ is a $2$-normal current with coefficients in $\R$. Therefore we can apply the standard constancy theorem (see for instance~\cite[\S 3.2,~Theorem 3]{CCC}) and find $u_i \in BV(\R^2)$ such that
\begin{equation}\label{const2}
T_i(f) = \int_{\R^2} f u_i \, d\mathscr{L}^2
\end{equation}
for every $i=1,\ldots,n$.
Hence combining \eqref{const1} and \eqref{const2} we conclude.
\end{proof}

We recall the definition of approximately regular vector fields both on $\R^n$ and on the covering space $Y$ (\cite{mumford, calicipi}).

\begin{dfnz}[Approximately regular vector fields on $\R^n$]
Given $A\subset \R^{n}$, a Borel vector field $\Phi: A \rightarrow \R^{n}$ is approximately regular
if it is bounded and for every Lipschitz hypersurface $M$ in $\R^{n}$, $\Phi$ admits traces on $M$ on the two sides of $M$ (denoted by $\Phi^+$ and $\Phi^-$) and 
\begin{equation}\label{app}
\Phi^+(x) \cdot \nu_M(x) = \Phi^-(x) \cdot \nu_M(x) = \Phi(x) \cdot \nu_M(x),
\end{equation}
for $\mathcal{H}^{n-1}$--a.e. $x \in M\cap A$.
\end{dfnz}  

\begin{dfnz}[Approximately regular vector fields on the covering $Y$]\label{approxi}
Given $\Phi: Y\rightarrow \R^2$, we say that it is \emph{approximately regular} in $Y$ if
$\Phi^j$
 is \emph{approximately regular} for every $j=1,\ldots,m$.  
\end{dfnz}

\begin{thm}\label{approximately}
Suppose that $\Phi:\mathbb{R}^2 \rightarrow M^{n\times 2}(\mathbb{R})$ is a matrix valued vector field such that its rows are approximately regular vector fields.
Given $T=[\Sigma, \tau, \theta]$ a
$1$--rectifiable current with coefficients in $\mathbb{Z}^n$, assume that $\Phi$ satisfies condition (i), (ii) and (iii) of Definition \ref{calicurrents}.
Then $T$ is mass minimizing among all rectifiable $1$--currents with coefficients
in $\mathbb{Z}^n$  in its homology class.
\end{thm}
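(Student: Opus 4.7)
The strategy is the classical calibration argument: for any rectifiable competitor $T'$ homologous to $T$, reduce mass-minimality to the identity $T(\Phi) = T'(\Phi)$, using that $\mathbb{R}^2$ is contractible so closed $1$--currents are exact, and then combine with conditions (ii) and (iii). The subtlety lies in handling the weaker regularity of $\Phi$.

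Fix $T' = [\Sigma', \tau', \theta']$ rectifiable with coefficients in $\mathbb{Z}^n$ and $\partial T' = \partial T$, and set $R := T - T'$, a normal $1$--current with $\partial R = 0$. By contractibility of $\mathbb{R}^2$ there exists a normal $2$--current $S$ with coefficients in $\mathbb{R}^n$ such that $\partial S = R$, and by Proposition \ref{constancy} this $S$ is represented by some $u \in BV(\mathbb{R}^2, \mathbb{R}^n)$ via $S(\omega) = \int \langle \omega, u\rangle \, dL^2$. If $\Phi$ were smooth, the standard computation would give $T(\Phi) - T'(\Phi) = R(\Phi) = \partial S(\Phi) = -S(d\Phi) = 0$, and combining with (ii) and (iii) would yield $\mathbb{M}(T) = T(\Phi) = T'(\Phi) \leq \mathbb{M}(T')$, which is the claim.

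To accommodate the weaker regularity of $\Phi$, I would regularize by convolution: put $\Phi_\varepsilon := \Phi * \rho_\varepsilon$ with a standard mollifier. Mollification commutes with the exterior derivative, so $d\Phi_\varepsilon = 0$, and $\|\Phi_\varepsilon\|_{\mathrm{com}} \leq 1$ by convexity of the comass applied pointwise. The smooth calibration identity then gives $T(\Phi_\varepsilon) = T'(\Phi_\varepsilon)$, and one is reduced to checking that $T(\Phi_\varepsilon) \to T(\Phi)$, $T'(\Phi_\varepsilon) \to T'(\Phi)$, and that the pointwise bound coming from (ii) is preserved in the limit so that condition (iii) still supplies $T(\Phi) = \mathbb{M}(T)$.

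The hardest step will be the limit passage on the rectifiable supports $\Sigma$ and $\Sigma'$, which are $\mathcal{H}^1$--negligible. On such sets mollifications of a piecewise regular field converge to the arithmetic mean of one-sided traces rather than to the pointwise value, so one has to show that these two traces already coincide $\mathcal{H}^1$--a.e.\ on $\Sigma \cup \Sigma'$. Approximate regularity of each row $\Phi_i$ gives continuity of the normal traces across any Lipschitz hypersurface, while $d\Phi = 0$ interpreted distributionally amounts to divergence-freeness of each $\Phi_i^\perp$, and by the classical BV/Anzellotti trace theory this yields continuity of the tangential traces of $\Phi_i$ as well. Together these force $\Phi^+ = \Phi^-$ in the trace sense along any $1$--rectifiable set, so $\Phi_\varepsilon \tau \to \Phi \tau$ at $\mathcal{H}^1$--a.e.\ point of $\Sigma \cup \Sigma'$; a bounded convergence argument then delivers the limit identities and the required mass minimality.
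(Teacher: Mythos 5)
Your skeleton matches the paper's: fill $T-T'$ with a normal $2$--current, represent it by a $BV$ function via Proposition~\ref{constancy}, mollify $\Phi$, and pass to the limit. The one step that does not go through as written is the central identity $T(\Phi_\varepsilon)=T'(\Phi_\varepsilon)$. You derive it from $R(\Phi_\varepsilon)=\partial S(\Phi_\varepsilon)=-S(d\Phi_\varepsilon)=0$, but $\partial S(\omega)=-S(d\omega)$ is a definition valid only for $\omega\in C_c^\infty$, and $\Phi\star\rho_\varepsilon$ is not compactly supported (an approximately regular field is merely bounded on all of $\R^2$). If you restore compact support by multiplying by a cutoff before mollifying, then $d\Phi_\varepsilon$ no longer vanishes near the cutoff and you must control the resulting term $\int\langle u,d\Phi_\varepsilon\rangle$ there. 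This is exactly what the first half of the paper's proof supplies: testing $U$ against forms supported outside $\Omega'$ shows $|Du|(\R^2\setminus\Omega')=0$, hence $u\equiv c$ off $\Omega'$, and replacing $u$ by $u^c=u-c$ gives a compactly supported representative for which the cutoff $(\chi_\Omega\Phi)\star\rho_n$ causes no harm. Your argument can be repaired along these lines (or by a cutoff-at-infinity argument exploiting $u\in L^1$), but as written the key identity is unjustified, and the missing ingredient --- the constancy-based normalization of $u$ --- is a genuine piece of the proof rather than a routine detail.

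The endgame also differs from the paper's in a way worth recording. You keep $d\Phi_\varepsilon=0$ and put all the weight on the $\Ha^1$--a.e.\ convergence of $\Phi_\varepsilon\tau$ on $\Sigma\cup\Sigma'$, justified by showing that the one-sided traces of each row coincide (normal components by approximate regularity, tangential components by $\div\Phi_i^\perp=0$ and Anzellotti-type trace theory); this is in fact more detail than the paper offers for its own assertion that $T(\Phi_n)\to T(\Phi)$. The paper instead integrates $-U(d\Phi_n)$ by parts into $\sum_i\int(\Phi_n)_i^\perp\cdot Du_i^c$, passes to the limit in that pairing, and invokes the generalized divergence theorem for approximately regular fields of \cite{mumford} to conclude that it vanishes. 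The two routes are interchangeable once the support issue above is fixed; the paper's has the advantage of quoting a ready-made Gauss--Green formula instead of proving trace continuity across arbitrary rectifiable sets.
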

\begin{proof}
Set $\Omega' \subset\subset \Omega \subset \R^2$ open, bounded, smooth sets such that they contain the convex envelope of $S$.
Given the candidate minimizer $T$ we take a competitor 
$\widetilde T = [\widetilde{\Sigma}, \widetilde \tau, \widetilde \theta]$: a rectifiable $1$--current in $\mathbb{R}^2$ with coefficients in $\mathbb{Z}^n$
 such that 
$\partial(T-\widetilde T)= 0$. Notice that we can suppose that $\Sigma, \widetilde \Sigma \subset \Omega'$.
There exists $U$ a normal $2$--current in $\mathbb{R}^2$ with coefficients in $\mathbb{Z}^n$
such that $T - \widetilde T = \partial U$. 
By Proposition \ref{constancy} there exists $u\in BV(\R^2, \R^n)$ such that for every $\omega\in C_c^\infty(\R^2, \R^n)$
\begin{displaymath}
U(\omega) = \int_{\R^2} \langle \omega, u\rangle \, d\mathscr{L}^{2}\,.
\end{displaymath}
Notice that for every $\phi \in C_c^\infty(\R^2,M^{n\times 2})$ supported in $\R^2 \setminus \Omega'$ we have
\begin{equation*}
0 = T(\phi) - \widetilde T(\phi) = -U(d\phi) = - \int_{\R^2}\langle u,d\phi\rangle \, d\mathscr{L}^2 = \sum_{i=1}^n \int_{\R^2} u_i \div \phi_i^\perp\, d\mathscr{L}^2\,. 
\end{equation*}

Taking the supremum on $\phi \in C_c^\infty(\R^2, M^{n\times 2})$ compactly supported in $\R^2 \setminus \Omega'$ such that $\|\omega\|_\infty \leq 1$
we infer that $|Du|(\R^2 \setminus \Omega') = 0$ and therefore there exists a vector $c\in \R^n$ such that $u(x) = c$ in $\R^2 \setminus \Omega'$ almost everywhere. Define then
\begin{equation}
U_0(\omega) = \int_{\R^2} \langle \omega,u^c\rangle\, d\mathscr{L}^2
\end{equation}
where $u^c(x) = u(x) - c$. It is easy to check that $U_0(d\phi) = U(d\phi)$ for every $\phi \in C_c^\infty(\R^2,M^{n\times 2})$.

Define now $\Phi_n \in C_c^\infty(\R^2, M^{n\times 2})$ as $\Phi_n = (\chi_{\Omega}\Phi)\star \rho_n$, where $\rho_n$ is a mollifier. Using the standard divergence theorem for $BV$ function we obtain
\begin{eqnarray}
T(\Phi_n) - \widetilde T(\Phi_n) &=& \partial U (\Phi_n)  =  - U(d\Phi_n) = - U_0(d\Phi_n) = -\int_{\R^2} \langle u^c,d\Phi_n\rangle \, d\mathscr{L}^2 \nonumber \\
&=& -\sum_{i=1}^n \int_{\R^2} u^c_i\div (\Phi_n)_i^\perp \, d\mathscr{L}^2
=  \sum_{i=1}^n \int_{\R^2} (\Phi_n)_i^\perp \cdot Du^c_i\,. \label{boh}
\end{eqnarray}
We observe that
\begin{equation*}
T(\Phi_n) = \int_{\Sigma}\langle \Phi_n\tau, \theta\rangle\, d\Ha^1 \rightarrow T(\Phi) \quad \mbox{as }n\rightarrow + \infty
\end{equation*}
because $\Sigma \subset \Omega$ and similarly for $\widetilde T$. Therefore taking the limit on both sides of \eqref{boh} we get
\begin{equation}
T(\Phi) - \widetilde T(\Phi) =  \sum_{i=1}^n \int_{\Omega} \Phi_i^\perp \cdot Du^c_i\,.
\end{equation}
Finally applying the divergence theorem for approximately regular vector fields (\cite{mumford}) and using that $u^c = 0$ on $\R^2 \setminus\Omega'$ we get
\begin{equation*}
T(\Phi) - \widetilde T(\Phi) = \sum_{i=1}^n \int_{\Omega} u^c_i \div \Phi_i^\perp \, d\mathscr{L}^2 = 0
\end{equation*} 
thanks to property (i) of a calibration.

Then the proof follows the same line of Proposition 3.2 in \cite{annalisaandrea}.
\end{proof}

\bibliographystyle{amsplain}
\bibliography{cali-c-p-bis}

\end{document}